\documentclass[a4paper,12pt]{amsart}
\usepackage[utf8]{inputenc}
\usepackage[centering]{geometry}
\usepackage{amsmath,amssymb,amsthm,amsfonts,mathrsfs}
\usepackage{mathtools}
\mathtoolsset{showonlyrefs}
\usepackage{graphicx}
\usepackage[T1]{fontenc}
\usepackage{booktabs}
\usepackage{xcolor}

\usepackage{romannum}

\usepackage{enumitem}

\usepackage{hyperref}
\hypersetup{
    colorlinks=true,
    linkcolor=blue,
    filecolor=magenta,      
    urlcolor=cyan,
    pdftitle={Overleaf Example},
    pdfpagemode=FullScreen,
    unicode=true
}

\theoremstyle{plain}
\newtheorem{theorem}{Theorem}[section]
\newtheorem{lemma}[theorem]{Lemma}
\newtheorem{proposition}[theorem]{Proposition}
\newtheorem{corollary}[theorem]{Corollary}

\theoremstyle{definition}
\newtheorem{definition}[theorem]{Definition}

\newtheorem{conjecture}[theorem]{Conjecture}

\theoremstyle{remark}
\newtheorem{remark}[theorem]{Remark}
\newtheorem{claim}[theorem]{Claim}

\title{Exceptional Loci and Hyperbolicity of Complements of Plane Curves with $\bar{c}_1^2 - \bar{c}_2 > 0$}
\author{Wei Chen}

\address{ Department of Mathematics and Physics \\
Roma Tre University   \\
Largo San Leonardo Murialdo, I-00146 \\
Rome, Italy}
\email{wei.chen@uniroma3.it, weichen97.ag@gmail.com}

\begin{document}

\pagenumbering{arabic}

\begin{abstract}
Let $B$ be a plane curve which has simple normal crossings. We prove that, if $B=B_1 \cup B_2$ has two irreducible components with $\deg B_1, \deg B_2 \geq 5$ or $\deg B_1 = 4, \deg B_2 \geq 7$, and if $B$ is general, then the associated algebraic exceptional set is empty.

Based on this, we show that for a general plane curve $B$ satisfying $\bar{c}_1^2 - \bar{c}_2 > 0$, the open surface $\mathbb{P}^2 \setminus B$ is Kobayashi hyperbolic.  
\end{abstract}
\subjclass[2020]{14H20, 14H45, 14J26}
\keywords{Algebraic exceptional set, hyperbolicity, rational curves.}

\maketitle

\tableofcontents 

\section{Introduction}
\label{sec:intro}
From the arithmetic point of view, Lang and Vojta conjectured that the rational (integral) points on a smooth (quasi-)projective variety of (log-)general type defined over (the ring of integers of) a number field are not Zariski dense, see \cite{Lang86} and \cite{vojta1987diophantine} for more details. In particular, working over $\mathbb{C}$, this motivates the following geometric conjecture:
\begin{conjecture}\label{conj: non-log-general-type subvariety}
    Let $(X,D)$ be a log-smooth variety, and $NG(X,D)$ be the union of subvarieties $Z\subset X$ such that $(Z, Z\cap D)$ is not of log-general type. Then $\overline{NG(X,D)}$ is a proper closed subvariety of $X$, where $\overline{NG(X,D)}$ is the Zariski closure of $NG(X,D)$.
\end{conjecture}

For precise definitions of all the notions involved in Conjecture~\ref{conj: non-log-general-type subvariety}, see Notations and Conventions in the beginning of Section~\ref{sec:prelim}.

This conjecture is widely open, even for surfaces. On surfaces, the only subvarieties that need to be considered are curves, and $NG(X,D)$ can be read in a more explicit way, which gives the definition of the so-called (Lang's) algebraic exceptional set:
\begin{definition}\label{def: algebraic exceptional set}
    Let $(S,B)$ be a log-smooth surface, then its algebraic exceptional set is
    $$
    \mathcal{E}(S,B) \coloneqq \left\{ C\subset S\ \vert\ C \text{ is an integral curve } \colon 2g(C)-2+\#\nu_C^{-1}(C\cap B) \leq 0 \right\},
    $$
    where $\nu_C\colon \Tilde{C} \to C$ is the normalization map. When the surface $S$ is clear from the context, we denote this set simply as $\mathcal{E}(B)$.
\end{definition}

\begin{remark}\label{rmk: exceptional set for ample boundary}
    If $B$ is ample in Definition~\ref{def: algebraic exceptional set}, then 
    $$
    \mathcal{E}(S,B) = \left\{ C\subset S\ \vert\ C \text{ is an integral curve, }\Tilde{C}\cong\mathbb{P}^1, \text{ and } \#\nu_C^{-1}(C\cap B) \leq 2 \right\}.
    $$
\end{remark}

Clearly, for log-smooth surfaces, Conjecture~\ref{conj: non-log-general-type subvariety} reads as:
\begin{conjecture}\label{conj: finite exceptional set}
    For a log-smooth surface $(S,B)$ of log-general type, $\mathcal{E}(B)$ is finite.
\end{conjecture}

When $B=\emptyset$ and $S$ is a smooth projective surface of general type with $c_1^2>c_2$, Bogomolov proved this conjecture in \cite{Bogomolov77}. More precisely, Bogomolov proved that curves of fixed genus on such a surface form a bounded family. In particular, such a surface has only finitely many rational and elliptic curves. 

When $B\neq \emptyset$, Conjecture~\ref{conj: finite exceptional set} is still widely open, even for $\mathbb{P}^2$. For plane curves, as a consequence of the logarithmic Kobayashi
conjecture\cite{Kobayashi70ConjectureGenericEmptiness}, we further have the following:
\begin{conjecture}\label{conj: generic empty exceptional set}
    Let $(\mathbb{P}^2,B)$ be log-smooth. If $\deg B \geq 5$, then $\mathcal{E}(B)=\emptyset$ for a general $B$.
\end{conjecture}
If $\deg B=4$ then $\mathcal{E}(B) \neq \emptyset$, as it contains the bitangent lines to $B$. If we assume $B$ to be very general, then Conjecture~\ref{conj: generic empty exceptional set} can be deduced from the work of Xu \cite{Xu96IntersectionVeryGeneralCurve}, where he proved that for a very general plane curve $B$ of degree $b\geq 3$, any other integral curve intersects $B$ in at least $b-2$ distinct points. It can also be deduced from the work of Xi Chen in \cite{XiChen04} and the work of Pacienza--Rousseau in \cite{PacienzaRousseau+2007+221+235} on logarithmic algebraic hyperbolicity. Furthermore, for a very general quartic curve, in \cite{chen2023algebraichyperbolicitycomplementsgeneric} the authors proved that the algebraic exceptional set consists exactly of bitangent lines and flex lines. However, Conjecture~\ref{conj: generic empty exceptional set} itself is still open. 

Historically, it turns out that the more irreducible components $B$ has, the easier the Conjectures~\ref{conj: finite exceptional set} and ~\ref{conj: generic empty exceptional set} are. If $B$ has at least $4$ irreducible components, then they become easy problems; see, for instance, \cite[Proposition~4.3.1]{caporaso2024hypertangencyplanecurvesalgebraic}. If $B$ has three irreducible components, then Conjecture~\ref{conj: finite exceptional set} is solved but requires a non-trivial proof, see \cite{CZ08}, \cite{guo2023vojtasabcconjecturealgebraic}. In the three-component case, recently, Caporaso and Turchet provided a new proof in \cite{caporaso2024hypertangencyplanecurvesalgebraic} which is purely algebro-geometric, and their proof drops the assumption of rationality of the curves in $\mathcal{E}(\mathbb{P}^2,B)$. More precisely, they considered the following set of the so-called hyperbitangent curves
$$
    \operatorname{Hyp}(B;2) \coloneqq \left\{ C\subset S\ \vert\ C \text{ is an integral curve, } \#\nu_C^{-1}(C\cap B) \leq 2 \right\},
$$
proved that $\operatorname{Hyp}(B;2) = \mathcal{E}(B)$, provided an effective bound of the cardinality of this set, and proved Conjecture~\ref{conj: generic empty exceptional set} in this case. Their method has been generalized, first to Hirzebruch surfaces in \cite{chen2025exceptionalsetHirzebruch}, and then to arbitrary surfaces in \cite{caporaso2026exceptionallocialgebraicsurfaces}. 

If $B$ has at most two irreducible components, only very few things were known. When $B$ has two irreducible components, Brotbek and Deng \cite{BrotdekDeng18} showed that when both irreducible components of $B$ have the same degrees larger than $2^{12}$, the augmented base locus of the logarithmic cotangent bundle is $\mathbb{B}_+(\Omega(\log B)) = B$ for a general $B$, and Conjecture~\ref{conj: generic empty exceptional set} follows from the fact that every curve $C \in \mathcal{E}(B)$ is contained in $\mathbb{B}_+(\Omega(\log B))$. When $B$ is irreducible (and smooth), Conjecture~\ref{conj: generic empty exceptional set} can be deduced from the works of Siu--Yeung \cite{SiuYeung96onecomponent} and Siu \cite{Siu15generichyperbolicity} for $\deg B \gg 0$; and some examples were constructed via arithmetic methods, see \cite{chen2026langvojtadegeneration} and the references there in.

In this paper, we study Conjecture~\ref{conj: generic empty exceptional set} for a two-component plane curve $B=B_1\cup B_2$. Our main result is the following theorem:
\begin{theorem}[Theorem~\ref{thm:TwoComponentGenericEmpty}]\label{thm: main 1}
    Let $B=B_1\cup B_2$ be a reduced plane curve with two irreducible components which has simple normal crossings, $b_1 \coloneqq \deg B_1 \leq b_2 \coloneqq \deg B_2$, and $b\coloneqq b_1 + b_2$. 
    
    If $b_2\geq b_1 \geq 5$ or $b_1=4$, $b_2\geq 7$, then for a general $B$, the algebraic exceptional set $\mathcal{E}(B)$ is empty.
\end{theorem}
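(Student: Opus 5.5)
Since $B$ is ample, Remark~\ref{rmk: exceptional set for ample boundary} reduces the problem to integral rational curves $C\neq B_1,B_2$ whose normalization meets $B$ in at most two points. My plan follows the degeneration/deformation strategy of Caporaso--Turchet \cite{caporaso2024hypertangencyplanecurvesalgebraic}, adapted to two components. I would split such curves according to how the (at most two) branches of $C$ through $B$ are distributed. Call $C$ of type $(1,1)$ if one point lies on $B_1$ and the other on $B_2$ (or both at a node of $B$). Call it of type $(2,0)$ or $(0,2)$ if both points lie on the same component. Since $C\cdot B_i=c\,b_i>0$, $C$ must meet both components, so a single point would have to be a node $B_1\cap B_2$, and type $(2,0)$ forces each of the two branches of $C$ meeting $B_2$ to lie at a node of $B$. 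Hence every case is a condition in which $C$ is totally tangent to $B_1$ and to $B_2$ at one or two points each, possibly located at the $b_1b_2$ nodes.

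Next I would run a dimension count on the incidence variety
\[
\mathcal{I}=\{(B_1,B_2,f)\ :\ f\colon\mathbb{P}^1\to\mathbb{P}^2 \text{ of degree } c,\ f^{-1}(B)\subset\{0,\infty\}\}
\]
over the parameter space $|\mathcal{O}(b_1)|\times|\mathcal{O}(b_2)|$. The goal is to show that this incidence variety does not dominate. Fix $f$. The conditions $f^*B_i=m_i[0]+n_i[\infty]$ with $m_i+n_i=cb_i$ impose on $B_i$ at most $cb_i+1$ linear conditions, minus the number of contact orders. That is, the locus of $B_i$ with $f^*B_i$ supported on $\{0,\infty\}$ has codimension about $cb_i-1$, with a correction when the restriction map $H^0(\mathcal{O}(b_i))\to H^0(\mathcal{O}_{\mathbb{P}^1}(cb_i))$ is not surjective. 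Maps $f$ up to $\mathrm{Aut}(\mathbb{P}^1,\{0,\infty\})$ form a family of dimension $3c+2-1$. Summing the two codimensions gives roughly $c(b_1+b_2)-2$. So dominance fails once
\[
c(b_1+b_2)-2-\delta > 3c+1,
\]
where $\delta$ measures the failure of surjectivity. This is the same computation as in the proof of Conjecture~\ref{conj: generic empty exceptional set} for very general $B$ with Xu's bound \cite{Xu96IntersectionVeryGeneralCurve}. The point is to make it work on a Zariski open set, i.e.\ for \emph{general} rather than very general $B$. This is achieved because the incidence for each fixed $c$ is a constructible set. A bound $c\le c_0(b_1,b_2)$ would then give finitely many constructible conditions, and hence genericity.

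The main obstacle is therefore twofold. First, one needs a degree bound on $C$ independent of $B$. Second, one needs to control the non-surjectivity defect $\delta$ when $C$ is highly singular, e.g.\ when $f$ is a multiple cover onto a low-degree curve or $C$ lies on curves of degree $<b_i$. For the degree bound I would use the logarithmic Miyaoka--Yau / Bogomolov-type inequality available for $(\mathbb{P}^2,B)$ when $\bar c_1^2-\bar c_2>0$. Alternatively, I would use the Caporaso--Turchet genus-type argument: pull back to the double cover or the $b_1$-, $b_2$-cyclic covers branched along $B_i$. There $C$ lifts to curves of controlled genus, and the Riemann--Hurwitz count forces $c\le$ an explicit constant. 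The numerical hypotheses $b_1\ge5$, or $b_1=4$ with $b_2\ge7$, would enter exactly here and in the inequality above. The case $b_1=4$ needs $b_2$ large enough to exclude bitangent-type lines and conics of $B_1$ also passing through nodes. For $\delta$, I would use Castelnuovo-type bounds on $h^1(\mathcal{I}_C(b_i))$ for integral plane curves to show $\delta\le$ a quantity absorbed by the margin. I expect the low-degree cases ($c\le2$, lines and conics through nodes of $B$) to need a direct separate check.
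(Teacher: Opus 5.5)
Your outer skeleton matches the paper's. First, a bound on $\deg C$ for $C\in\mathcal{E}(B)$ that is independent of $B$; this is Sabatino's logarithmic effective Bogomolov inequality, Theorem~\ref{thm:LogCanonicalBound-Sabatino}, and it is the only place $\bar c_1^2-\bar c_2>0$ is used. Then one Zariski-open condition on $(B_1,B_2)$ for each of the finitely many remaining degrees. The gap is in the fixed-degree step: your dimension count on $\mathcal{I}$ cannot establish non-dominance except for small $c$.

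The structural problem is that you fix $f$, count conditions on $(B_1,B_2)$ and add $\dim\{f\}=3c+1$. Any bound of the form $\dim\{f\}+\dim(\text{fibre over } f)$ is at least $3c+1$. That is $\geq N_{b_1}+N_{b_2}=\dim\lvert\mathcal{O}(b_1)\rvert\times\lvert\mathcal{O}(b_2)\rvert$ as soon as $c\gtrsim(b_1^2+b_2^2)/6$, however you estimate $\delta$.

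Concretely, for $c>b_i$ the integral curve $C$ lies on no curve of degree $b_i$. So the image of $H^0(\mathcal{O}_{\mathbb{P}^2}(b_i))\to H^0(\mathcal{O}_{\mathbb{P}^1}(cb_i))$ has dimension exactly $\binom{b_i+2}{2}$, and
\[
\delta_i=cb_i+1-\binom{b_i+2}{2},
\]
which grows like $cb_i$. Your inequality then becomes $\binom{b_1+2}{2}+\binom{b_2+2}{2}-4>3c+1$, which is false for large $c$.

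Castelnuovo-type bounds cannot rescue this. $h^1(\mathcal{I}_C(b_i))=h^1(\mathcal{O}_{\mathbb{P}^2}(b_i-c))=0$ for every plane curve. The defect is $\delta_i=p_a(C)-h^1(\mathcal{O}_C(b_i))$, i.e.\ it comes entirely from the singularities of $C$, and it is genuinely of size about $cb_i$. Nothing suggests the effective Bogomolov bound falls inside the range where your count works. Its constants depend on $\bar c_1^2$ and $\bar c_2$, and $\bar c_1^2-\bar c_2=(b_1-3)(b_2-3)-3$ can equal $1$.

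Slicing the incidence by $f$ therefore only restates the problem. What you would need is a bound on the dimension of the locus of maps $f$ for which some suitable $(B_1,B_2)$ exists. The negative expected dimension $1-c(b-3)$ is only a heuristic until a transversality statement is proved.

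That transversality input is the missing idea, and the paper supplies it with the Pacienza--Rousseau argument. The twisted logarithmic tangent bundle $T_{\mathbb{P}^2\times\mathbb{P}^{N_{b_1}}\times\mathbb{P}^{N_{b_2}}}(-\log\mathscr{X})(1,0,0)$ of the universal curve is globally generated (Proposition~\ref{prop: Global Generation of Twisted Tangent Bundle}). If degree-$d$ curves occurred over a dense set of $B$, restricting top-degree log forms to a family $\mathscr{D}$ of such curves gives a non-zero section of $(K_{\tilde D}+\nu_D^{-1}(D\cap B))\otimes\nu_D^*\mathcal{O}_{\mathbb{P}^2}(4-b)$ (Theorem~\ref{thm: Non Zero Section of Twisted Log Canonical}). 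Hence $\deg\bigl(K_{\tilde D}+\nu_D^{-1}(D\cap B)\bigr)\geq d(b-4)>0$ for every integral $D$ of degree $d$ and general $B$.

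This works uniformly in $d$, needs no case split by contact type, and needs no separate check of lines and conics. It also shows that the fixed-degree step uses only $b>4$. Contrary to your proposal, the numerical hypotheses do not enter through excluding bitangent-type lines when $b_1=4$; they enter only through the degree bound. Finally, your alternative route to the degree bound via cyclic covers and Riemann--Hurwitz is not an argument as stated. Use Sabatino's inequality, which is what the paper does.
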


The assumptions on the degrees here are given by the log-Chern class inequality $\Bar{c}_1^2-\Bar{c}_2>0$, see Lemma~\ref{lem:degree of two component curve with c1^2-c2>0}. Our proof follows the strategy of Pacienza--Rousseau's work \cite{PacienzaRousseau+2007+221+235} for very general boundary divisors in $\mathbb{P}^n$. To remove the very general assumption, we will need, as just mentioned, the log-Chern class inequality $\Bar{c}_1^2-\Bar{c}_2>0$. This brings us back to the fundamental work of Bogomolov \cite{Bogomolov77}. After Bogomolov's breakthrough, it has been conjectured that there is a uniform version of Bogomolov's result for any (minimal) smooth projective surface of general type with $c_1^2-c_2>0$, see, for instance, \cite[Section~2, the very last remark]{Tian96Cetraro}. This was finally achieved by Miyaoka in \cite{Miyaoka08EffectiveBogomolov}. Later, Sabatino generalized Miyaoka's work to the logarithmic setting in \cite{Sabatino22LogEffectiveBogomolov}, which provides us with the tool we need.

Recall that a complex manifold $Y$ is said to be \textit{Brody hyperbolic} if there is no non-constant holomorphic map $f\colon \mathbb{C} \to Y$, and it is said to be \textit{Kobayashi hyperbolic} if the Kobayashi pseudo-distance is a distance. It is well-known that Kobayashi hyperbolicity implies Brody hyperbolicity, and Brody proved that for a compact complex manifold, the two notions are equivalent \cite{Brody78brodyhyperbolicity}. For more details on hyperbolicity, we refer the readers to \cite{Kobayashi98hyperbolicspace}.

Combining Theorem~\ref{thm: main 1} with the work of Caporaso--Turchet \cite{caporaso2026exceptionallocialgebraicsurfaces} and a result of El Goul \cite{ElGoul03AlgebricDegeneracy}, we have the following:
\begin{theorem}[Theorem~\ref{thm: Brody hyperbolicity of complement of plane curve with c1^2-c2>0}]
    Let $(\mathbb{P}^2,B)$ be a log-smooth surface with $\bar{c}_1^2 - \bar{c}_2>0$. Then for a general $B$, the surface $\mathbb{P}^2\setminus B$ is Brody hyperbolic.
\end{theorem}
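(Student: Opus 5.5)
Let $B$ have $k$ irreducible components of degrees $b_1\le\dots\le b_k$ and total degree $b$. For $S=\mathbb{P}^2$ with $D=B$ a simple normal crossing curve, we have $\bar c_1^2=(b-3)^2$. Writing $\delta$ for the number of nodes, we have $\bar c_2=e(\mathbb{P}^2\setminus B)=3-e(B)+\ldots$, computed from the genera of the $B_i$ and $\delta=\sum_{i<j}b_ib_j$. The plan is to reduce Brody hyperbolicity to two statements:
\begin{enumerate}
\item[(a)] $\mathcal{E}(B)=\emptyset$ for general $B$;
\item[(b)] every entire curve $f\colon\mathbb{C}\to\mathbb{P}^2\setminus B$ is algebraically degenerate, i.e.\ its image lies in an algebraic curve.
\end{enumerate}
Given both, take a non-constant $f$. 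Its Zariski closure $C$ is an integral curve, and the normalization lifts $f$ to $\tilde C\setminus\nu_C^{-1}(B)$. By Picard/Liouville, this forces $2g(\tilde C)-2+\#\nu_C^{-1}(C\cap B)\le 0$, so $C\in\mathcal{E}(B)$. This contradicts (a).

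For (b) I will invoke El Goul's result \cite{ElGoul03AlgebricDegeneracy}: for a log-smooth surface of log-general type with $\bar c_1^2-\bar c_2>0$, the logarithmic cotangent bundle is big, and entire curves in the complement are algebraically degenerate. The hypothesis $\bar c_1^2>\bar c_2$ is exactly what is assumed. Log-general type is automatic when $b\ge 4$, and the inequality forces $b$ to be large enough, so this can be checked quickly.

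For (a) I will split according to $k$ by a direct computation of $\bar c_1^2-\bar c_2$.
\begin{itemize}
\item If $k\ge 4$ (or more generally $k\ge3$), emptiness of $\mathcal{E}(B)$ for general $B$ follows from Caporaso--Turchet \cite{caporaso2024hypertangencyplanecurvesalgebraic,caporaso2026exceptionallocialgebraicsurfaces}, whose generic emptiness statements cover at least three components. Here I should check that the degree conditions there are implied by, or compatible with, $\bar c_1^2>\bar c_2$. If some small-degree configurations (e.g.\ several lines and conics) are excluded there, I would check by hand that they fail the inequality. For many lines, any line through two nodes meets $B$ in few points, but such arrangements do not satisfy the inequality.
\item If $k=2$, the inequality is exactly the degree condition of Lemma~\ref{lem:degree of two component curve with c1^2-c2>0}, and Theorem~\ref{thm: main 1} applies.
\item If $k=1$, I compute $\bar c_1^2-\bar c_2=(b-3)^2-(3-2+2g-2+\ldots)$ with $g=(b-1)(b-2)/2$. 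I expect this to be negative for every $b$, so the irreducible case simply does not occur.
\end{itemize}

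The main obstacle is the bookkeeping in the case split. I must verify that the set of $(b_1,\dots,b_k)$ with $\bar c_1^2>\bar c_2$ is covered exactly: $k=1$ is empty, $k=2$ is given by the lemma, and $k\ge3$ satisfies the hypotheses under which Caporaso--Turchet give generic emptiness. I also need to confirm that El Goul's degeneracy theorem applies to log pairs with simple normal crossing boundary, not only smooth boundary. If the published version needs smooth boundary, I would pass to a log resolution, which is unnecessary here since $B$ is already simple normal crossing.
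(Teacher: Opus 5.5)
Your reduction is the paper's argument. The paper combines El Goul's algebraic degeneracy (Proposition~\ref{prop:AlgebraicDegeneracyOfEntireCurve-ElGoul}) with generic emptiness of $\mathcal{E}(B)$. That emptiness comes from Caporaso--Turchet for at least three components and from Theorem~\ref{thm:TwoComponentGenericEmpty} for two, and Lemma~\ref{lem:degree of two component curve with c1^2-c2>0} is used to see that these cases exhaust $\bar c_1^2-\bar c_2>0$. Your Liouville step is the link the paper leaves implicit: you lift $f$ to $\tilde C\setminus\nu_C^{-1}(C\cap B)$, which is hyperbolic as soon as $2g(C)-2+\#\nu_C^{-1}(C\cap B)>0$.

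However, the bookkeeping you flagged as the main obstacle does not come out as you predict. For a simple normal crossing plane curve one has $\bar c_1^2-\bar c_2=\sum_{i<j}b_ib_j-3b+6$. For $k=1$ this is $6-3b$, which is \emph{positive} for a line, and for two lines it equals $1$. So the inequality does not force $b\ge 4$, and the irreducible case does occur. For these $B$, El Goul's result is unavailable and the conclusion is false, since $\mathbb{P}^2\setminus B$ is $\mathbb{C}^2$ or $\mathbb{C}\times\mathbb{C}^*$.

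The statement therefore has to be read with the standing hypothesis that $(\mathbb{P}^2,B)$ is of log-general type, i.e.\ $\deg B\ge 4$. The paper assumes this tacitly: Lemma~\ref{lem:degree of two component curve with c1^2-c2>0}, as quoted, omits exactly these configurations. Once you add that hypothesis, your case split closes:
\begin{itemize}
\item for $k=1$, $6-3b\le -6$;
\item for $k=2$, the inequality reads $(b_1-3)(b_2-3)>3$, which together with $b\ge 4$ is precisely the range of Theorem~\ref{thm:TwoComponentGenericEmpty};
\item for $k\ge 3$, the only configurations with $b\le 4$ are three lines, two lines and a conic, and four lines, where the expression equals $0,-1,0$; so the inequality forces $b\ge 5$, as Caporaso--Turchet require.
\end{itemize}
Your aside that line arrangements fail the inequality is also inaccurate, since five or more lines in general position satisfy it. They are covered by Caporaso--Turchet, though, so nothing needs to be checked by hand there.
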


With a little bit more work and applying a famous theorem of Green \cite{Green77hyperbolicity}, we prove:
\begin{theorem}[Theorem~\ref{thm: Kobayashi hyperbolicity of complement of plane curve with c1^2-c2>0}]
    Let $(\mathbb{P}^2,B)$ be a log-smooth surface with $\bar{c}_1^2 - \bar{c}_2>0$. Then for a general $B$, the surface $\mathbb{P}^2\setminus B$ is hyperbolically embedded in $\mathbb{P}^2$. In particular, it is Kobayashi hyperbolic.
\end{theorem}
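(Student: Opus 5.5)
We prove the hyperbolic embedding by combining the Brody hyperbolicity statement (Theorem~\ref{thm: Brody hyperbolicity of complement of plane curve with c1^2-c2>0}) with Green's criterion \cite{Green77hyperbolicity}. Green's theorem says the following. Let $X$ be a compact complex manifold and $D=\sum D_i$ a divisor. Suppose there is no non-constant holomorphic map $\mathbb{C}\to X\setminus D$. Suppose also that for every stratum, i.e. every subset $I$ of components, there is no non-constant holomorphic map
\[
\mathbb{C}\to \Bigl(\bigcap_{i\in I} D_i\Bigr)\setminus \bigcup_{j\notin I} D_j .
\]
Then $X\setminus D$ is hyperbolically embedded in $X$. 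We apply this with $X=\mathbb{P}^2$ and $D=B=\bigcup_i B_i$, so what has to be checked is Brody hyperbolicity of the open strata of $B$.

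Since $B$ has simple normal crossings, each $B_i$ is a smooth irreducible plane curve. The strata come in three kinds.

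\textbf{Points.} Strata with $|I|\ge 2$ are finite sets of intersection points, so they admit no non-constant entire curves.

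\textbf{Components of genus $\ge 2$.} For $|I|=1$ the stratum is $B_i^\circ \coloneqq B_i\setminus\bigcup_{j\neq i}B_j$. If $g(B_i)\ge 2$, the compact curve $B_i$ is already hyperbolic, and so is its open subset.

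\textbf{Components of low genus.} The remaining cases are lines, conics and smooth cubics. A punctured curve is Brody hyperbolic as soon as its log Euler characteristic is negative, that is,
\[
2g(B_i)-2+\#\Bigl(B_i\cap\bigcup_{j\ne i}B_j\Bigr)>0.
\]
Because $B$ is simple normal crossings, $B_i$ meets each $B_j$ transversally in exactly $b_ib_j$ distinct points. Hence
\[
\#\Bigl(B_i\cap \bigcup_{j\ne i}B_j\Bigr)=b_i(b-b_i).
\]
For a cubic it is enough that $b>3$, i.e. $B\ne B_i$. For a conic we need at least $3$ points, which holds once $b-b_i\ge 2$. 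For a line we need at least $3$ points, which holds once $b-b_i\ge 3$.

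The main obstacle is to rule out the degenerate configurations using the hypothesis $\bar c_1^2-\bar c_2>0$. These are: $B$ irreducible of degree $\le 3$; a conic together with a single line; a line together with boundary of total degree $\le 2$ (a line plus a conic, or a line plus one or two other lines). To do this, compute
\[
\bar c_1^2=(b-3)^2,\qquad \bar c_2=3-\sum_i(2-2g_i)+\sum_{i<j}b_ib_j ,
\]
and check on this finite list that $\bar c_1^2-\bar c_2\le 0$ in each case. This is exactly the kind of computation done in Lemma~\ref{lem:degree of two component curve with c1^2-c2>0}. Log general type already forces $b\ge 4$, so $\bar c_1^2>0$, which excludes most of the list. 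In the single-component case, $b\le 3$ is excluded outright.

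With every stratum shown to be Brody hyperbolic, Green's criterion gives that $\mathbb{P}^2\setminus B$ is hyperbolically embedded in $\mathbb{P}^2$. Kobayashi hyperbolicity of $\mathbb{P}^2\setminus B$ then follows, since a hyperbolically embedded subspace is in particular hyperbolic.
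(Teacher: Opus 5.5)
Your argument is the paper's: Green's criterion plus the Brody hyperbolicity of Theorem~\ref{thm: Brody hyperbolicity of complement of plane curve with c1^2-c2>0} reduces everything to hyperbolicity of the strata $B_i^\circ$, which you settle by the same puncture count $b_i(b-b_i)$. The paper does this uniformly, $2g(B_i)-2+b_i(b-b_i)=b_i(b-3)$, which is positive exactly when $b\ge 4$. So your genus case split and your list of degenerate configurations (all of which have $b\le 3$) can be skipped.

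One step is wrong as written: the proposed check that $\bar c_1^2-\bar c_2\le 0$ on that list. For a single line, $\bar c_1^2-\bar c_2=4-1=3$; for two lines it is $1-0=1$. In both cases $\mathbb{P}^2\setminus B$ ($\mathbb{C}^2$, resp. $\mathbb{C}\times\mathbb{C}^*$) is not hyperbolic. So $b\ge 4$ cannot be extracted from the Chern inequality alone. It has to come from the log-general-type assumption, which must be read into the statement. The paper imposes it explicitly in Lemma~\ref{lem : hyperbolically embedding criterion}, and it is tacit in Lemma~\ref{lem:degree of two component curve with c1^2-c2>0} and in the Brody hyperbolicity theorem you use as input. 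With that assumption, which your closing sentences already invoke, your proof is complete.
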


It is worth mentioning that in a recent preprint, for a general two-component plane curve of certain lower degrees, using 2-jet differentials, Xie and Zhao \cite{XieZhao2026kobayashihyperbolicitygeneralsurfaces} proved that the complement open surface is hyperbolically embedded in $\mathbb{P}^2$ and hence is Kobayashi hyperbolic.

The paper is organized as follows:
In Section~\ref{sec:prelim}, we collect the necessary preliminary results. In Section~\ref{sec:algebraic exceptional set and hyperbolicity}, we prove our main results.

\subsection*{Acknowledgements}
The author would like to thank his supervisor Lucia Caporaso for constant support during the development of this paper. The author thanks Gianluca Pacienza, Erwan Rousseau for valuable conversations. The author
thanks Damian Brotbek, Ya Deng, and Gian Pietro Pirola for helpful discussions. The author thanks Amos Turchet for helpful comments. The author also thanks the organizers of the workshop “Topology of Algebraic Varieties” at Technische Universität Chemnitz during March 10-13, 2026 for providing a stimulating working environment. The author is partially supported by PRIN 2022L34E7W, and is a member of the INdAM group GNSAGA.

\subsection*{Declaration of Use of AI}
The proof of Proposition~\ref{prop: Global Generation of Twisted Tangent Bundle} and the first two paragraphs of the proof of Theorem~\ref{thm: Non Zero Section of Twisted Log Canonical} were written interactively with the assistance of Gemini 3.1 Pro. The author takes full responsibility for the correctness of the mathematical content.

\section{Preliminaries}
\label{sec:prelim}
\subsection*{Notations and Conventions} We work over $\mathbb{C}$. 
\begin{itemize}
    \item Let $B=\sum B_i$ be an effective divisor on a smooth projective variety $X$ of dimension $n$. We say $B$ has simple normal crossings (or $B$ is a simple normal crossing divisor) if $B$ is reduced, each component $B_i$ is smooth, and $B$ is defined in a neighborhood of any point by an equation in local analytic coordinates of the type $z_1 \cdots z_k = 0$ for some $k\leq n$.
    \item We say $(S,B)$ is a log-smooth surface if $S$ is a smooth projective surface, and $B = \sum_{i=1}^{r} B_i$ is a reduced effective divisor on $S$ which has simple normal crossings with $r$ distinct irreducible components $B_i$, $i=1,\dots,r$.
    \item On a smooth projective variety $S$, the Picard group $\operatorname{Pic}(S)$ is canonically identified with the divisor class group modulo linear equivalence $\operatorname{Div}(S)/\sim$. By a slight abuse of notation, for a divisor $D \in \operatorname{Div}(S)$ and a line bundle $\mathcal{L}$ over $S$, we will write $\mathcal{L}+D$ for the corresponding class in $\operatorname{Pic}(S) \cong \operatorname{Div}(S)/\sim$.
    \item Let $B = \sum_{i=1}^{r} B_i$ be a reduced effective divisor on a smooth projective variety $X$. We say $B$ is general if it satisfies a certain property $\mathcal{P}$ such that there is a non-empty Zariski open subset $U$ in $\lvert B_1\rvert \times \cdots \times \lvert B_r\rvert$ with all the members in $U$ satisfying $\mathcal{P}$ and $B\in U$.
    \item For any $m\in \mathbb{N}$, we denote by $\mathbb{P}^{N_m} \coloneqq \mathbb{P}\left( H^0(\mathbb{P}^2 , \mathcal{O}_{\mathbb{P}^2}(m)) \right)$ the linear system parameterizing degree $m$ plane curves. It has dimension $(m^2+3m)/2$.
\end{itemize}

In this section, we collect the known results that we will need.

By direct computation, we have the following lemma:
\begin{lemma}[\cite{Rousseau21degreeboundoftwocomponentcurve}, Corollary~2.2]\label{lem:degree of two component curve with c1^2-c2>0}
    Let $S = \mathbb{P}^2$ and $B = \cup_{i=1}^{r} B_i$ a simple normal crossing curve where $B_i$ is a smooth curve of degree $b_i$, $b_1 \leq b_2 \leq \cdots \leq b_r$. Then its log-Chern classes satisfy $\bar{c}_1^2 - \bar{c}_2 > 0$ if and only if
    $r\geq 5$; or, $r\geq 4$ and $b_4\geq 2$; or, $r \geq 3$ and $b_1 \geq 2 , b_3 \geq 3$ or $b_1=1 , b_2\geq 3 , b_3\geq 4$; or, $r=2$ and $b_1 \geq 5$ or $b_1=4 , b_2\geq 7$.
\end{lemma}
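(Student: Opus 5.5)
The plan is to compute both log-Chern numbers explicitly in terms of the degrees $b_i$, reduce the inequality to a symmetric polynomial condition, and then do a finite case analysis on $r$.

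\emph{The two Chern numbers.} For $\bar{c}_1^2$: with $b\coloneqq\sum b_i$ we have $K_{\mathbb{P}^2}+B\sim(b-3)H$, so $\bar{c}_1^2=(b-3)^2$.

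For $\bar{c}_2$: I use that it equals the topological Euler characteristic $e(\mathbb{P}^2\setminus B)=3-e(B)$. Each $B_i$ is smooth of genus $g_i$, so $e(B_i)=2-2g_i=3b_i-b_i^2$. Since $B$ has simple normal crossings, $B_i\cap B_j$ consists of exactly $b_ib_j$ distinct points and no point lies on three components. Inclusion--exclusion then gives $e(B)=\sum_i(3b_i-b_i^2)-\sum_{i<j}b_ib_j$.

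Subtracting, the quadratic terms collapse because $b^2=\sum b_i^2+2\sum_{i<j}b_ib_j$:
$$
\bar{c}_1^2-\bar{c}_2=F(b_1,\dots,b_r)\coloneqq\sum_{i<j}b_ib_j-3b+6 .
$$
So the lemma reduces to deciding when $F>0$.

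\emph{Case analysis in $r$.} The key tool is the monotonicity identity: raising one $b_k$ by $1$ changes $F$ by $\bigl(\sum_{j\neq k}b_j\bigr)-3$. Hence $F$ is nondecreasing in $b_k$ as soon as the other degrees sum to at least $3$. Likewise, adding a new component of degree $c$ changes $F$ by $c(b-3)$, which is nonnegative once $b\ge 3$. With these two facts, each case reduces to checking minimal configurations.

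For $r=2$, $F=(b_1-3)(b_2-3)-3$.
\begin{itemize}
\item If $b_1=2$, $F=-b_2<0$; if $b_1=3$, $F=-3<0$.
\item If $b_1=4$, $F=b_2-6$, which is positive iff $b_2\ge 7$.
\item If $b_1\ge 5$, $F\ge 2\cdot 2-3>0$.
\end{itemize}
For $r=3$, I would write $F=b_1b_2+(b_1+b_2)b_3-3(b_1+b_2+b_3)+6$ and split on $b_1$.
\begin{itemize}
\item If $b_1=1$, $F=(b_2-2)(b_3-2)-1$, which is positive iff $b_2\ge3$ and $b_3\ge4$ (given $b_2\le b_3$).
\item If $b_1=2$, $F=b_2b_3-b_2-b_3=(b_2-1)(b_3-1)-1$. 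This fails for $(2,2,2)$, where $F=0$, and holds exactly when $b_3\ge 3$.
\item If $b_1\ge 3$, then $b_3\ge 3$ and $F>0$ follows from monotonicity starting from $(2,2,3)$.
\end{itemize}
For $r=4$, $(1,1,1,1)$ gives $F=6-12+6=0$, and $(1,1,1,2)$ gives $F=9-15+6=0$. The smallest admissible cases $(1,1,1,3)$ and $(1,1,2,2)$ give $F=12-18+6=0$ and $F=13-18+6=1>0$. I would check carefully against the cited statement here, since this is exactly where the condition on $b_4$ (versus $b_3$) must match. For $r\ge5$, the five-line case gives $F=10-15+6=1>0$, and adding components only increases $F$.

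\emph{Main obstacle.} The only delicate points are, first, bookkeeping the boundary cases correctly against the stated list, especially $r=4$. Second, the very low degrees ($r\le 2$ with $b\le 2$, e.g.\ a line or two lines, where $F>0$ trivially but the pair is not of log-general type). These are presumably excluded implicitly in the cited source by assuming $K+B$ big, i.e.\ $b\ge4$. I would state that convention explicitly and otherwise just verify the polynomial inequalities above.
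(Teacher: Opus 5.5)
Your route is the paper's own: the paper gives no argument beyond ``direct computation'' (citing Rousseau). Your reduction $\bar c_1^2-\bar c_2=F\coloneqq\sum_{i<j}b_ib_j-3b+6$, via $\bar c_1^2=(b-3)^2$ and $\bar c_2=3-e(B)$, is correct. So are your closed forms and conclusions for $r=2$, $r=3$ and $r\ge5$.

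What you should not do is leave $r=4$ as something to ``check against the cited statement''. Your own numbers already decide it, and they go against the printed lemma.

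\emph{The $r=4$ clause is misprinted.} The configuration $(1,1,1,2)$ satisfies ``$r\ge4$ and $b_4\ge2$'' yet has $F=0$. More generally, your increment identity gives $\Delta F=1+1+1-3=0$ when raising $b_4$ in $(1,1,1,b_4)$, so $F(1,1,1,b_4)=0$ for every $b_4$. On the other hand, whenever $b_3\ge2$, monotonicity starting from $(1,1,2,2)$, where $F=1$, gives $F\ge1$. In closed form, $F(1,1,b_3,b_4)=(b_3-1)(b_4-1)$. Hence for $r=4$ one has $F>0$ if and only if $b_3\ge2$, i.e.\ at most two lines. The ``if'' direction of the lemma as printed is false, and the clause should read $b_3\ge2$.

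\emph{The low-degree cases are genuine counterexamples.} The cases you flag contradict the ``only if'' direction. For $r=1$ you get $F=6-3b_1$, which equals $3$ for a line, and two lines give $F=1$. So a hypothesis such as $b\ge3$ must be added; this is automatic under log-general type, $b\ge4$, which the paper tacitly assumes.

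What you can actually complete is therefore a proof of the corrected statement, and you should say so explicitly rather than hedge. The paper's applications only need the $r=2$ clause with $b_1\ge4$, which your analysis covers completely. They also need the fact that $F>0$ with $r\ge3$ forces $\deg B\ge5$, which also survives the correction. So nothing downstream is affected.
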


By generalizing Miyaoka's work \cite{Miyaoka08EffectiveBogomolov} on smooth projective surface of general type with $c_1^2-c_2 > 0$ to the logarithmic setting, Sabatino proved the following:

\begin{theorem}[\cite{Sabatino22LogEffectiveBogomolov}, Corollary~1.3]\label{thm:LogCanonicalBound-Sabatino}
    Let $(S,B)$ be a log-smooth surface and $C$ be an irreducible curve on $S$ which is not contained in $B$. If $K_S+B$ is big and nef and moreover $\bar{c}_1^2-\bar{c}_2>0$, then 
    $$
    (K_S+B) \cdot C \leq \mathbf{A} \left( 2g(C) - 2 + \#\nu_C^{-1}(C\cap B) \right) + \mathbf{B},
    $$
    where $\mathbf{A}$ and $\mathbf{B}$ are two constants depending only on $\bar{c}_1^2$ and $\bar{c}_2$.
\end{theorem}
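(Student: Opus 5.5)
The plan is to follow Miyaoka's effective Bogomolov argument \cite{Miyaoka08EffectiveBogomolov}, transplanted to the logarithmic setting. The approach is to work on the projectivized log cotangent bundle $\pi\colon P\coloneqq\mathbb{P}(\Omega_S(\log B))\to S$, with tautological line bundle $L=\mathcal{O}_P(1)$. Every integral curve $C\not\subset B$ lifts, via its tangent directions, to a curve $C'\subset P$ (the image of $\tilde C$). The surjection
$$\nu_C^*\Omega_S(\log B)\to \Omega_{\tilde C}\bigl(\log \nu_C^{-1}(C\cap B)\bigr)$$
(possibly after saturation) gives
$$L\cdot C'\le 2g(C)-2+\#\nu_C^{-1}(C\cap B).$$
So it suffices to find an effective divisor $D\in|mL-\pi^*(k(K_S+B))|$ with $k/m\ge t_0>0$ and $t_0$ depending only on $\bar c_1^2,\bar c_2$. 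Then every curve whose lift is not contained in $D$ satisfies
$$t_0\,(K_S+B)\cdot C\le L\cdot C'\le 2g-2+\#\nu_C^{-1}(C\cap B).$$

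\textbf{Step 1: sections of twisted symmetric powers.} Set $\bar c_1=c_1(\Omega_S(\log B))=K_S+B$ and $\bar c_2$ its second Chern class. Then $L^3=\bar c_1^2-\bar c_2>0$. By Riemann--Roch on $P$,
$$\chi\bigl(mL-\pi^*(tm\,\bar c_1)\bigr)=\frac{m^3}{6}\Bigl(\bar c_1^2-\bar c_2-3t\,\bar c_1^2+O(t^2)\Bigr)+O(m^2).$$
The $H^2$ term equals $H^0$ of $S^{m-2}T_S(-\log B)\otimes$ a twist, by Serre duality on $S$. It vanishes for small $t$ because $\Omega_S(\log B)$ is semistable with respect to the big and nef class $K_S+B$ (Kawamata's log version of Bogomolov--Miyaoka--Yau/semistability), together with $(K_S+B)^2>0$. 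Choosing $t_0$ explicitly in terms of $\bar c_1^2,\bar c_2$ then gives $h^0>0$ for $m\gg0$ and produces $D$.

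\textbf{Step 2: curves whose lifts lie in $D$.} Decompose $D=\sum a_jD_j$. Components of the form $\pi^{-1}(\Gamma)$ are handled directly, because $\pi^*(K_S+B)$ is nef. The genuine components are multisections of $\pi$, i.e.\ multifoliations on $S$, and the curves $C$ with $C'\subset D_j$ are leaves of these. For such a component I would pass to a resolution of $D_j\to S$, which is a generically finite cover. On the cover the leaves are integral curves of a genuine foliation, and I would bound $(K_S+B)\cdot C$ using the classes of $D_j$, namely $D_j\equiv m_jL-\pi^*E_j$, together with the log tangency/index formula for foliations, $L|_{D_j}\cdot C'$ versus $D_j\cdot C'$. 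This yields a linear inequality of the same shape, with constants controlled by $m$, $t_0$ and the degrees of the $E_j$, all bounded in terms of $\bar c_1^2,\bar c_2$.

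The main obstacle is Step 2. There one must control the fixed divisor $D$ uniformly, so that the number and degrees of its components, and thus the constants $\mathbf{A},\mathbf{B}$, depend only on $\bar c_1^2,\bar c_2$. In Miyaoka's proof this is done via a Zariski decomposition of $L-t\pi^*\bar c_1$ on $P$ and a careful intersection computation on the components. I would first try to copy that computation verbatim with $\Omega_S(\log B)$ in place of $\Omega_S$, and check that the only new inputs are the log semistability and the log tangency inequality for $C'$.
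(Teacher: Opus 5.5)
The paper does not prove this statement. It is quoted from \cite[Corollary~1.3]{Sabatino22LogEffectiveBogomolov}, which the paper describes as the extension of Miyaoka's effective Bogomolov--McQuillan theorem \cite{Miyaoka08EffectiveBogomolov} to the logarithmic setting. Judged on its own, your proposal has a genuine gap, and it sits exactly where the content of the theorem lies. The lifting inequality $L\cdot C'\le 2g(C)-2+\#\nu_C^{-1}(C\cap B)$ and your Step~1 are the standard Bogomolov argument and are fine, with one slip. Serre duality on $S$ identifies the $H^2$ term with the dual of $H^0(S,S^mT_S(-\log B)\otimes\mathcal{O}_S(K_S+k\bar c_1))$, so the symmetric power is $S^m$, not $S^{m-2}$; semistability kills this group for $t<1/2$ and $m\gg 0$. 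Step~1 therefore gives the inequality with $\mathbf{A}=1/t_0$, $\mathbf{B}=0$ for every $C$ whose lift is not contained in $D$. What separates the statement from Bogomolov-type boundedness, however, is that $\mathbf{A},\mathbf{B}$ depend \emph{only} on $\bar c_1^2,\bar c_2$. For the curves with $C'\subset D$ you simply assert this.

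In fact nothing in your construction is controlled by $\bar c_1^2,\bar c_2$ alone.
\begin{itemize}
\item The $m$ with $h^0(mL-k\pi^*\bar c_1)>0$ is governed by lower-order Riemann--Roch terms involving $\chi(\mathcal{O}_S)$ and $K_S\cdot(K_S+B)$. These are not functions of $\bar c_1^2,\bar c_2$.
\item Writing $D=\sum a_jD_j$ with $D_j\equiv m_jL-\pi^*E_j$ fixes only $\sum a_jE_j$, up to vertical parts. The individual $E_j\cdot(K_S+B)$ are unconstrained.
\item A vertical component $a\,\pi^{-1}(\Gamma)$ is not disposed of by nefness when $C=\Gamma$. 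It then contributes $a\Gamma^2$, which can be very negative.
\end{itemize}

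The foliation step does not repair this. From $m_jL\cdot C'=D_j\cdot C'+E_j\cdot C$ you would need a lower bound on the normal degree $D_j\cdot C'$ of a curve lying in $D_j$. That is a bounded-negativity statement as hard as the theorem itself. If the multifoliation on $D_j$ is algebraically integrable, its leaves form a covering family on which $(L-t_0\pi^*\bar c_1)\cdot C'$ can be negative. Then $(K_S+B)\cdot C$ is bounded only by its constant value on that particular family, which depends on $(S,B)$. If the multifoliation is not algebraically integrable, Jouanolou's theorem gives finitely many invariant curves, again with no uniform bound. As sketched, this route recovers at most Bogomolov's boundedness for each fixed $(S,B)$.

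The missing ingredient is a Chern-number inequality that controls these base-locus curves directly. As the title of \cite{Miyaoka08EffectiveBogomolov} indicates, Miyaoka obtains effectivity from an orbibundle Miyaoka--Yau--Sakai inequality. Note also that you cannot take for granted a Zariski decomposition of $L-t\pi^*\bar c_1$ on the threefold $P$, since such decompositions need not exist in dimension three.

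The uniformity is essential for this paper. Theorem~\ref{thm:TwoComponentGenericEmpty} needs a degree bound $N(b_1,b_2)$ independent of $B$, so that a single general $B$ can be chosen for which Theorem~\ref{thm: Non Zero Section of Twisted Log Canonical} holds in every degree $d\le N(b_1,b_2)$.
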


\begin{corollary}[\cite{Sabatino22LogEffectiveBogomolov}, Corollary~1.4]\label{cor: UniformFinitenessExceptionalSet-Sabatino}
    Let $(S,B)$ be a log-smooth surface. If $K_S+B$ is big and nef and moreover $\bar{c}_1^2-\bar{c}_2>0$, then the irreducible curves $C$ on $S$ that are not contained in $B$ and such that $2g(C)-2+\#\nu_C^{-1}(C\cap B)$ is bounded form a bounded family, where the number of components is bounded by a function of $\bar{c}_1^2-\bar{c}_2$. In particular, the algebraic exceptional set $\mathcal{E}(S,B)$ is finite and its cardinality is bounded by a function of $\bar{c}_1^2$ and $\bar{c}_2$.
\end{corollary}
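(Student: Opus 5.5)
The plan is to derive the corollary directly from Theorem~\ref{thm:LogCanonicalBound-Sabatino}: first turn the bound on $2g(C)-2+\#\nu_C^{-1}(C\cap B)$ into a bound on a degree, then into boundedness in a Chow variety, and finally use bigness of $K_S+B$ to rule out positive-dimensional families inside $\mathcal{E}(S,B)$.

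\emph{Degree bound.} Fix $M$ and let $C\not\subset B$ be an irreducible curve with $2g(C)-2+\#\nu_C^{-1}(C\cap B)\le M$. Theorem~\ref{thm:LogCanonicalBound-Sabatino} gives $(K_S+B)\cdot C\le \mathbf{A}M+\mathbf{B}$, where the right-hand side depends only on $M$, $\bar c_1^2$ and $\bar c_2$.

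\emph{Passing to an ample class.} Since $K_S+B$ is big and nef, Kodaira's lemma gives an integer $m>0$ and a decomposition
$$m(K_S+B)\sim H+E,$$
with $H$ ample and $E$ effective. Only finitely many curves lie in $\operatorname{Supp}E$. Every other curve $C$ satisfies $E\cdot C\ge 0$, hence
$$H\cdot C\le m(\mathbf{A}M+\mathbf{B}).$$
Curves of bounded $H$-degree are parametrized by finitely many components of the Chow variety (or Hilbert scheme) of $S$. This proves that the family is bounded.

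\emph{Uniform count.} Bounding the number of components by a function of $\bar c_1^2-\bar c_2$ alone is the delicate point, since $H$ and $m$ depend on $(S,B)$. I would not try to make Kodaira's lemma uniform. Instead I would reuse the Miyaoka--Sabatino construction behind Theorem~\ref{thm:LogCanonicalBound-Sabatino}: there the curves are controlled via the tautological ring of $\mathbb{P}(\Omega_S(\log B))$, and their lifts have bounded degree in terms of the log-Chern numbers. A Chow-variety count on that uniformly polarized space then bounds the number of components. I expect this uniformity to be the main obstacle; everything else is formal.

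\emph{Finiteness of $\mathcal{E}(S,B)$.} Apply the above with $M=0$. The curves contained in $B$ are finitely many, so it suffices to show that each Chow component $T$ containing curves of $\mathcal{E}(S,B)$ contributes only finitely many such curves. Suppose a positive-dimensional family of such curves exists.
\begin{itemize}
\item If the family does not cover $S$, its members all lie in a fixed curve, hence are finitely many.
\item If it covers $S$, use the log version of the standard deformation inequality for a dominating family. For a general member, whose normalization maps with nonnegative normal-sheaf degree, it gives
$$(K_S+B)\cdot C\le 2g(C)-2+\#\nu_C^{-1}(C\cap B)\le 0.$$
Alternatively, the family would give a dominant image of $\mathbb{C}$ or $\mathbb{C}^*$, contradicting the log-general type of $(S,B)$.
\end{itemize}
In the covering case, a big and nef divisor is strictly positive on a covering family of curves, so $(K_S+B)\cdot C>0$, a contradiction. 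Hence each of the boundedly many components contributes finitely many curves.

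Making this last step quantitative, so that the number of curves in each component is bounded uniformly, again relies on the same uniform degree bound as in the count above. This yields the bound on $\#\mathcal{E}(S,B)$ in terms of $\bar c_1^2$ and $\bar c_2$.
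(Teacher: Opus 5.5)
The paper does not prove this statement. It is imported from Sabatino (Corollary~1.4 there), and the paper itself only uses the degree bound of Theorem~\ref{thm:LogCanonicalBound-Sabatino}. Judged on its own, the qualitative half of your proposal is correct and is the standard argument:
\begin{itemize}
\item Theorem~\ref{thm:LogCanonicalBound-Sabatino} bounds $(K_S+B)\cdot C$.
\item Kodaira's lemma turns this into a bound on $H\cdot C$ away from the finitely many curves in $\operatorname{Supp}E$, hence finitely many Chow components.
\item A component with infinitely many curves of $\mathcal{E}(S,B)$ gives a covering one-parameter family. Its log normal sheaf $\nu_C^*T_S(-\log B)/T_{\tilde C}(-\log \nu_C^{-1}(C\cap B))$ has degree $-(K_S+B)\cdot C+2g-2+k$ and carries a nonzero deformation section. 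This gives $(K_S+B)\cdot C\le 2g-2+k\le 0$, contradicting positivity of a big and nef class on a covering family.
\end{itemize}
Two small points should be made explicit. First, the locus of $\mathcal{E}$-curves in a Chow component is constructible (geometric genus is lower semicontinuous, and after stratifying one has simultaneous normalization). So ``infinitely many'' really yields a positive-dimensional family whose general member lies in $\mathcal{E}(S,B)$. Second, the contact orders with $B$ must be constant along that family; this is exactly what makes the deformation section logarithmic.

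The genuine gap is everything uniform: the bound on the number of components and on $\#\mathcal{E}(S,B)$ in terms of the log Chern numbers. Your $m$, $H$ and Chow variety all depend on $(S,B)$, and no count carried out on $S$ itself can be uniform, because log-smooth pairs with fixed $\bar c_1^2,\bar c_2$ are not bounded. Blowing up a point of $B_i\cap B_j$ and adding the exceptional curve to the boundary gives $K_{S'}+B'=\pi^*(K_S+B)$ and the same open surface. Hence the log Chern numbers and the relevant curves are unchanged, while $\rho$ increases by one, and this can be iterated.

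Your proposed substitute does not repair this. The tautological bundle on $\mathbb{P}(\Omega_S(\log B))$ is never ample when $B\neq\emptyset$: the residue quotient $\Omega_S(\log B)|_{B_i}\to\mathcal{O}_{B_i}$ gives a curve on which it has degree $0$. The hypothesis $\bar c_1^2>\bar c_2$ only makes it big, and degree bounds against a big class bound nothing in a Chow variety. Likewise, a uniform degree bound alone does not bound the number of isolated special members (the $\mathcal{E}$-curves) inside a component.

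What is missing is a bounded model on which to count. One option is the log canonical model of $(S,B)$, where $K+B$ is ample of volume $\bar c_1^2$; such models move in a bounded family with a uniform very ample multiple (Alexeev-type boundedness). The $(K_S+B)$-trivial curves contracted by this model must be counted separately; these are $(-1)$-curves meeting $B$ once and $(-2)$-curves missing $B$, and all of them lie in $\mathcal{E}(S,B)$. One then runs noetherian induction on the finite-type relative Chow scheme over that base to bound the number of irreducible components of the curve loci, each of which is a point by your finiteness argument.
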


We will also need the following two results:
\begin{proposition}[\cite{ElGoul03AlgebricDegeneracy}, Corollary~2.4.3]\label{prop:AlgebraicDegeneracyOfEntireCurve-ElGoul}
    Let $(S,B)$ be a log-smooth surface of log-general type with $\bar{c}_1^2-\bar{c}_2>0$. Then every entire curve $f \colon \mathbb{C} \to S\setminus B$ is algebraically degenerate, that is, $f(\mathbb{C})$ lies in an algebraic curve in $S$.
\end{proposition}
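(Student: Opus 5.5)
The plan is to follow Bogomolov's strategy in the logarithmic setting. We combine the abundance of logarithmic symmetric differentials forced by $\bar{c}_1^2-\bar{c}_2>0$ with the fundamental vanishing theorem for entire curves. The remaining tangential (foliation) case is then handled by a McQuillan-type degeneracy statement.

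\textbf{Step 1 (existence of twisted log symmetric differentials).} Fix an ample line bundle $A$ on $S$. Riemann--Roch on $\mathbb{P}(T_S(-\log B))$ gives
$$
\chi\bigl(S^m\Omega_S(\log B)\bigr)=\frac{\bar{c}_1^2-\bar{c}_2}{6}\,m^3+O(m^2).
$$
By Serre duality,
$$
h^2\bigl(S^m\Omega_S(\log B)\bigr)=h^0\bigl(S^mT_S(-\log B)\otimes\mathcal{O}(K_S)\bigr).
$$
This vanishes for $m\gg0$. The reason is that $K_S+B$ is big, and a nonzero section would give a nonzero map $S^m\Omega_S(\log B)\to K_S$. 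That contradicts the Bogomolov--Sommese-type positivity, namely the semistability of $\Omega_S(\log B)$ with respect to $K_S+B$ on a log-general type surface. Hence
$$
h^0\bigl(S^m\Omega_S(\log B)\otimes A^{-1}\bigr)\geq c\,m^3
$$
for some $c>0$ and $m\gg0$. In particular, $\mathcal{O}_{\mathbb{P}(T_S(-\log B))}(m)\otimes\pi^*A^{-1}$ is big, where $\pi\colon\mathbb{P}(T_S(-\log B))\to S$ is the projection.

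\textbf{Step 2 (fundamental vanishing).} Let $f\colon\mathbb{C}\to S\setminus B$ be entire and let $f_{[1]}$ be its canonical lift to $\mathbb{P}(T_S(-\log B))$. By the logarithmic Green--Griffiths/Noguchi vanishing theorem (a consequence of the logarithmic derivative lemma), every $\omega\in H^0(S^m\Omega_S(\log B)\otimes A^{-1})$ satisfies $\omega(f')\equiv0$. Therefore $f_{[1]}(\mathbb{C})$ lies in the base locus $Z$ of $\lvert\mathcal{O}(m)\otimes\pi^*A^{-1}\rvert$. By Step 1, $Z$ is a proper subvariety of the threefold $\mathbb{P}(T_S(-\log B))$.

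\textbf{Step 3 (analysis of $Z$).} Take an irreducible component $Z'$ of $Z$ containing $f_{[1]}(\mathbb{C})$.
\begin{itemize}
\item If $\pi(Z')$ is a curve or a point, then $f(\mathbb{C})\subset\pi(Z')$ and $f$ is algebraically degenerate.
\item Otherwise $Z'$ is a surface dominating $S$, so it defines a multi-foliation on $S$ with logarithmic poles along $B$, and $f$ is tangent to it. Passing to a generically finite cover (a resolution of $Z'$) turns this into a genuine foliation $\mathcal{F}$ on a log-smooth surface $(S',B')$ that is still of log-general type. The lift of $f$ is then tangent to $\mathcal{F}$.
\end{itemize}

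\textbf{Step 4 (the main obstacle).} We must show that an entire curve tangent to a foliation on a log-general type surface is algebraically degenerate. This is the logarithmic version of McQuillan's theorem. I would first invoke McQuillan's tautological inequality in its logarithmic form and his classification via the Nevanlinna current $T_f$. If $f$ were Zariski dense, then $T_f\cdot K_{\mathcal{F}}\leq 0$, and reduction to canonical singularities of $\mathcal{F}$ combined with $T_f\cdot(K_S+B)>0$ yields a contradiction. The delicate points are the compatibility of the reduction of singularities with the boundary $B'$, and the behaviour at the ramification of $Z'\to S$. This is where I expect most of the technical effort to go.
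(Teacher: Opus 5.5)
Since the paper gives no argument of its own (it quotes El Goul's Corollary~2.4.3), your proposal has to be judged as a proof of El Goul's result. It has a genuine gap in Step 4, which is where essentially all of that result's content lies.

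Your Steps 1--3 are the standard reduction behind that result: log Riemann--Roch with vanishing of $h^2$, the logarithmic fundamental vanishing theorem, and reduction to an entire curve tangent to a (multi-)foliation. They are fine up to two small repairs. First, the semistability of $\Omega_S(\log B)$ used in Step 1 is a known input when $K_S+B$ is ample, as for $\mathbb{P}^2$ with $\deg B\ge 4$, or after passing to a log minimal model; it is not a formal consequence of bigness alone. Second, in Step 3 take $Z'$ to be the Zariski closure of $f_{[1]}(\mathbb{C})$, not an arbitrary component of $Z$. Then either $f$ is already degenerate, or $f_{[1]}$ is not contained in $\mathrm{Sing}(Z')$ and lifts to the resolution $S'$.

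Step 4 is not a technicality. It is the logarithmic McQuillan theorem, the actual content of El Goul's paper, and the sketch you give does not close. Write $K_{S'}=K_{\mathcal F}-N_{\mathcal F}$. Then $T_f\cdot K_{\mathcal F}\le 0$ only gives $T_f\cdot(K_{S'}+B')\le T_f\cdot(B'-N_{\mathcal F})$, which does not contradict $T_f\cdot(K_{S'}+B')>0$. Two ingredients are missing.

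\begin{enumerate}
\item McQuillan's normal bundle estimate $T_f\cdot N_{\mathcal F}\ge 0$, which comes from $T_f^2\ge 0$ and the $\mathcal F$-invariance of $T_f$. Without it the argument fails even when $B=\emptyset$.
\item Control of $T_f\cdot B'$. Although $f$ omits $B'$, only the counting function vanishes; the proximity term does not. So $T_f\cdot B'$ is in general positive: for $e^z\colon\mathbb{C}\to\mathbb{P}^1\setminus\{0,\infty\}$, the term $T_f\cdot[\infty]$ is the whole characteristic function. You therefore need the tautological inequality on $\mathbb{P}(T_{S'}(-\log B'))$, not on $\mathbb{P}(T_{S'})$. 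It bounds the logarithmic canonical bundle of the foliation, roughly $K_{\mathcal F}+B'_{\mathrm{n}}$, where $B'_{\mathrm{n}}$ is the non-$\mathcal F$-invariant part of $B'$. You also need a normal-bundle estimate absorbing the invariant part, $T_f\cdot(N_{\mathcal F}-B'_{\mathrm{inv}})\ge 0$. Finally, McQuillan's handling of the singularities of $\mathcal F$ and of $\mathcal F$-chains must be made compatible with the boundary and with the ramification of $S'\to S$.
\end{enumerate}

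As written, Step 4 is a citation of El Goul's theorem plus a heuristic that does not yield the contradiction. If you simply cite that theorem for Step 4, your proposal becomes the argument behind the result the paper quotes.
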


\begin{theorem}[\cite{Xu98finitenessofrationalcurves}, Theorem~1]\label{thm : xu98-finiteness-of-rational-curves-in-a-linear-system}
    Let $B$ be a smooth curve on a smooth projective surface $S$ with geometric genus $g(B) > 0$. If $\mathcal{L} \in \operatorname{Pic}(S)$ is a line bundle with $(\mathcal{L} \cdot B) > 0$, then there are only a finite number of integral curves $D \in \lvert \mathcal{L} \rvert$ with geometric genus
    $$
    g(D) < \frac{1}{2}\left( (K_S + B)\cdot \mathcal{L}\right) + 1 
    $$
    such that $D$ intersects the curve $B$ at exactly one point (set-theoretically).
\end{theorem}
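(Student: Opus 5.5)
The plan is to argue by contradiction using logarithmic deformation theory of the normalization maps, with the hypothesis $g(B)>0$ used to freeze the point of contact. Suppose there are infinitely many integral curves $D\in\lvert\mathcal{L}\rvert$ of geometric genus $g<\frac12\left((K_S+B)\cdot\mathcal{L}\right)+1$ meeting $B$ set-theoretically in one point. The locus of such curves is constructible in $\lvert\mathcal{L}\rvert$. So, after passing to a suitable irreducible component and a finite base change, we get a non-isotrivial one-parameter family of such curves. On normalizations this is a family of maps $f_t\colon C_t\to S$ from smooth genus $g$ curves, with $f_t^{*}B=m\,p_t$ for a single point $p_t\in C_t$. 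Here $m=\mathcal{L}\cdot B>0$.

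\textbf{Step 1: the contact point is fixed.} Put $q_t=f_t(p_t)\in B$. Then $\mathcal{L}|_B\cong\mathcal{O}_B(m\,q_t)$ for every $t$. Since $g(B)>0$, the map $B\to \operatorname{Pic}^m(B)$, $q\mapsto mq$, is finite: it is the Abel--Jacobi embedding followed by multiplication by $m$ on the Jacobian, which is an isogeny. Hence only finitely many points $q$ satisfy $\mathcal{O}_B(mq)\cong\mathcal{L}|_B$. By connectedness of the family, $q_t\equiv q$ is constant.

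\textbf{Step 2: log normal sheaf.} View $f\colon (C,p)\to(S,B)$ as a log morphism and form the logarithmic normal sheaf
$$N'_f \coloneqq f^{*}T_S(-\log B)\big/ T_C(-p).$$
First-order deformations of $f$ keeping the condition ``$f^{*}B$ is supported at one point'' give sections of $N'_f$. Since the family is non-trivial in $\lvert\mathcal{L}\rvert$, the tangent vector gives a section $\sigma\in H^0(C,N'_f)$ whose image in $N'_f/\mathrm{tors}$ is nonzero. Its degree is
$$\deg N'_f=-(K_S+B)\cdot\mathcal{L}-(2-2g-1)=2g-1-(K_S+B)\cdot\mathcal{L}.$$
By the genus hypothesis, $2g-2<(K_S+B)\cdot\mathcal{L}$, so $\deg N'_f\le 0$. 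The torsion-free quotient $\overline{N}$ then has degree $\le 0$.

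\textbf{Step 3: gain one more vanishing.} By Step 1, the image of $p$ stays at the fixed point $q$ along the family. I will show that this forces $\sigma$, modulo torsion, to vanish at $p$. The reason is that the component of the deformation along $B$ at $q$ is the derivative of $q_t$, which is zero. So $\sigma$ gives a nonzero section of $\overline{N}(-p)$, which has degree $<0$. This is a contradiction, so there are only finitely many such curves.

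The main obstacle is making Steps 2 and 3 rigorous. First, one must check that the log normal sheaf really receives the infinitesimal deformation and that the section survives in the torsion-free quotient. Torsion comes from the ramification of $f$ and the singularities of $D$; I would use the standard fact that the Kodaira--Spencer section does not lie in the torsion for a family whose image moves. Second, one must prove the extra vanishing at $p$. I would try a local computation at $p$: take coordinates with $B=\{y=0\}$ near $q$, so that $f=(x(s),y(s))$ with $y=s^m u(s)$. Then analyse the log vector fields $x\partial_x$ versus $\partial_x$ and $y\partial_y$, and show that a fixed contact point kills the $\partial_x$-component at $s=0$.
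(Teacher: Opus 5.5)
The paper does not prove this statement; it quotes it from Xu and uses it as a black box, so your argument is judged on its own. Its structure is the right one: use $g(B)>0$ to freeze the contact point, then show that the characteristic section of a normal sheaf of degree $\le 0$ acquires one extra zero. However, there is a concrete gap at the start.

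The hypothesis is that $D\cap B$ is a single point set-theoretically. It does not say that $\nu_D^{-1}(D\cap B)$ is a single point. $D$ may have several branches through $q$, so in general $f_t^*B=\sum_{i=1}^k m_i\,p_{i,t}$ with $k\ge 2$ and $f_t(p_{i,t})=q_t$ for all $i$. In that case $df$ does not map $T_C(-p_1)$ into $f^*T_S(-\log B)$: near $p_2$ the coefficient $y'/y$ of $y\partial_y$ has a simple pole. So your $N'_f$ is not defined.

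This case is not marginal. It is exactly what the paper needs for the type~(i) curves in its last theorem, which have $\#(D\cap B)=1$ but $\#\nu_D^{-1}(D\cap B)=2$. The fix is as follows.
\begin{itemize}
\item Take $P\coloneqq p_1+\dots+p_k$ and $N'_f\coloneqq f^*T_S(-\log B)/T_C(-P)$. This sheaf has degree $2g-2+k-(K_S+B)\cdot\mathcal{L}\le k-1$.
\item Step 1 is unchanged, since still $\mathcal{L}|_B\cong\mathcal{O}_B(mq)$.
\item Every $p_{i,t}$ maps to the fixed point $q$, so you get a zero at each $p_i$. Hence $\sigma$ is a nonzero section of $\overline{N}(-P)$, which has degree $\le -1$.
\end{itemize}

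With that change the proof closes, and the points you list as obstacles are routine.
\begin{itemize}
\item Local freeness at $p$: in your coordinates $df(s\partial_s)=sx'\,\partial_x+(m+su'/u)\,y\partial_y$, which equals $m\,y\partial_y\neq 0$ at $s=0$. So $N'_f$ is locally free at $p$ with fibre spanned by $\partial_x$, even if $f$ is ramified there.
\item The extra zero: write $p_t=\{s=s_t\}$ and $v=\partial_t f_t|_{t=0}$. Then $v+s_0'\,df(\partial_s)$ is a holomorphic section of $f^*T_S(-\log B)$ representing $\sigma$, because the pole $-ms_0'/s$ of $\partial_t y/y$ cancels. Differentiating $x_t(s_t)=0$ shows that its $\partial_x$-coefficient vanishes at $s=0$.
\item Non-torsionness: $N'_f\hookrightarrow N_f=f^*T_S/T_C$ with cokernel of length $\sum_i(m_i-1)$. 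The usual fact for $N_f$ when the image moves then gives it.
\end{itemize}
The same bookkeeping shows your log version is equivalent to the classical one with $N_f$. There one lands in degree $\le 2g-2-(K_S+B)\cdot\mathcal{L}<0$ regardless of $k$.
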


\section{Algebraic Exceptional Set and Hyperbolicity}
\label{sec:algebraic exceptional set and hyperbolicity}
Let $Z = [Z_0:Z_1:Z_2]$ be the homogeneous coordinates on $\mathbb{P}^2$. Let $X = [X_\alpha]$ and $Y = [Y_\beta]$ be the coordinates on $\mathbb{P}^{N_{b_1}}$ and $\mathbb{P}^{N_{b_2}}$ respectively, where $\alpha = (\alpha_0, \alpha_1, \alpha_{2}) \in \mathbb{N}^3$, $\beta=(\beta_0, \beta_1, \beta_2) \in \mathbb{N}^3$, $\lvert \alpha \vert = \sum \alpha_i=b_1$ and $\lvert \beta \rvert = \sum \beta_i =b_2$.

Let $\mathscr{X} \subset \mathbb{P}^2 \times \mathbb{P}^{N_{b_1}} \times \mathbb{P}^{N_{b_2}}$ be the universal hypersurface of degree $(b_1+b_2, 1, 1)$ given by the equation
$$
\left(\sum_{\lvert \alpha \rvert = b_1} X_{\alpha} Z^{\alpha} \right) \left(\sum_{\lvert \beta \rvert = b_2} Y_{\beta} Z^{\beta}\right) 
= 0
$$
where $[Z] \in \mathbb{P}^2$, $[X_{\alpha}] \in \mathbb{P}^{N_{b_1}}$, $[Y_{\beta}] \in \mathbb{P}^{N_{b_2}}$, $Z^{\alpha} = Z_0^{\alpha_0} Z_1^{\alpha_1} Z_2^{\alpha_2}$ and similarly for $Z^{\beta}$.

Following the approach of Pacienza--Rousseau\cite{PacienzaRousseau+2007+221+235}, we obtain the following Proposition~\ref{prop: Global Generation of Twisted Tangent Bundle} and Theorem~\ref{thm: Non Zero Section of Twisted Log Canonical}:

\begin{proposition}\label{prop: Global Generation of Twisted Tangent Bundle}
The twisted logarithmic tangent bundle 
$$T_{\mathbb{P}^2 \times \mathbb{P}^{N_{b_1}} \times \mathbb{P}^{N_{b_2}}}(-\log \mathscr{X})(1,0,0)$$ 
is generated by its global sections.
\end{proposition}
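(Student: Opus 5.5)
We follow the explicit vector-field method of Pacienza--Rousseau (ultimately going back to Voisin, Siu and Merker), adapted to the reducible universal hypersurface $\mathscr{X}=\mathscr{X}_1\cup\mathscr{X}_2$, where $\mathscr{X}_1=\{F=0\}$ and $\mathscr{X}_2=\{G=0\}$ with $F=\sum_\alpha X_\alpha Z^\alpha$ and $G=\sum_\beta Y_\beta Z^\beta$. The twisted logarithmic tangent sheaf consists of vector fields $V$, twisted by $\mathcal{O}(1,0,0)$, that are tangent to $\mathscr{X}$. Since $\mathscr{X}$ is the reduced divisor $FG=0$, this means $V(F)\in (F)$ and $V(G)\in (G)$.

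Outside $\mathscr{X}$ there is nothing to check beyond global generation of $T(1,0,0)$. The sheaf $T_{\mathbb{P}^2}(1)$ is globally generated, being a quotient of $\mathcal{O}(2)^{3}$. Each $T_{\mathbb{P}^{N_{b_i}}}$ is globally generated by the Euler sequence, and $\mathcal{O}(1,0,0)$ is globally generated. So the real content is at points of $\mathscr{X}$, and we work in affine charts, say $Z_0\neq 0$, $X_{(b_1,0,0)}\neq 0$, $Y_{(b_2,0,0)}\neq 0$. In affine coordinates $z=(z_1,z_2)$, $x_\alpha$, $y_\beta$ we have
$$
f=x_{0}+\sum_{\alpha\neq 0}x_\alpha z^\alpha ,\qquad g=y_0+\sum_{\beta\neq0}y_\beta z^\beta .
$$

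\textbf{Step 1 (fields along the parameters).} The fields $\partial/\partial x_\alpha$ with $\alpha\neq0$ do not preserve $f$. Instead we use the combinations
$$
\partial_{x_\alpha}-z^\alpha\partial_{x_0},
$$
which annihilate $f$, and similarly on the $y$ side. These have pole order $0$ in $z$ when $|\alpha|=0$ and otherwise pole order up to $|\alpha|$, which is too much for a twist by $\mathcal{O}(1)$. So we follow Merker's trick of multi-index shifts: fields of the form $\partial_{x_{\alpha+e_j}}-z_j\partial_{x_\alpha}$ kill $f$ and have pole order $1$ in $z$, hence are global sections after the $(1,0,0)$ twist. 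Likewise, $x_{\alpha}\partial_{x_{\alpha}}$-type Euler fields preserve the ideal.

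\textbf{Step 2 (fields along $\mathbb{P}^2$).} Lift $\partial_{z_j}$ to
$$
\partial_{z_j}-\sum_\alpha c_\alpha\,\partial_{x_\alpha}-\sum_\beta d_\beta\,\partial_{y_\beta},
$$
with coefficients linear in the $x$ and $y$ respectively, chosen so that the derivative $\partial_{z_j}f=\sum_\alpha\alpha_j x_\alpha z^{\alpha-e_j}$ is cancelled by the $x$-correction. The $x$-fields act on $f$ only and the $y$-fields act on $g$ only, so the two cancellations are independent. The coefficients $x_\alpha$ do not involve $z$, so these fields have pole order $1$ when we rewrite $z^{\alpha-e_j}$ via the shifted fields of Step 1.

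\textbf{Step 3 (spanning).} We check that at each point of $\mathscr{X}$ the fields from Steps 1 and 2, together with sections of $T(-\log)$ tangent to the $\mathbb{P}^N$-directions, span the fibre of $T(-\log\mathscr{X})$. This rank is full: it is the whole tangent space off the singular locus of $\mathscr{X}$, and the log subspace at $\mathscr{X}_1\cap\mathscr{X}_2$. Because the defining equations are separated, meaning $f$ involves only $(z,x)$ and $g$ only $(z,y)$, the verification reduces to the irreducible case in each factor plus compatibility of the $z$-lifts. At points of $\mathscr{X}_1\cap\mathscr{X}_2$ the log bundle is locally free; one checks this by noting that $\partial_{x_0}$ and $\partial_{y_0}$ make $\mathscr{X}$ a normal crossing divisor in the total space.

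\textbf{Main obstacle.} The main obstacle is Step 3 at points of $\mathscr{X}_1\cap\mathscr{X}_2$, together with the charts where the chosen leading coefficients vanish. Here one must confirm that the $z$-lifts simultaneously preserve both $f$ and $g$ without pole order exceeding $1$. I would handle this by treating general charts via symmetry of the coordinates, exactly as Pacienza--Rousseau do.
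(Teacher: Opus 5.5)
Your direct route differs from the paper's. Following Pacienza--Rousseau, the paper passes to the auxiliary complete intersection $\mathscr{Z}\subset\mathbb{P}^4\times U$ cut out by $X_{new}Z_3^{b_1}+\sum_\alpha X_\alpha Z^\alpha$ and $Y_{new}Z_4^{b_2}+\sum_\beta Y_\beta Z^\beta$. There the boundary becomes $\{Z_3Z_4=0\}$, the logarithmic directions are supplied by the explicit fields $v_3z_3\partial_{z_3}$ and $v_4z_4\partial_{z_4}$ (with vertical corrections), and global generation is pushed down by the log morphism $\pi$.

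A direct argument can work, but yours has a genuine gap in Step~3, which is where all the content lies. The gap comes from misidentifying the fibre. At $p\in\mathscr{X}_1\setminus\mathscr{X}_2$ with local equation $w$, the fibre of $T(-\log\mathscr{X})$ is not ``the whole tangent space''. It maps onto $T_p\mathscr{X}_1$ only, with a one-dimensional kernel spanned by $w\,\partial_w$ (two-dimensional on $\mathscr{X}_1\cap\mathscr{X}_2$). A log field $V$ reaches that kernel only through $(V(w)/w)(p)$.

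All fields of Steps~1--2 are built to satisfy $V(f)=V(g)=0$. If $f$ is an honest local equation, they all have zero component on this kernel and cannot span. (On $X_{(b_1,0,0)}\neq 0$ one has $x_0\equiv 1$, so $\partial_{x_0}$ is not even available.) If instead $x_0$ is a free variable, you are on the cone. You must then still quotient by the Euler fields and compute how your fields act on the honest equation $f/x_c$, and that bookkeeping is absent.

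Several further claims do not hold. The Euler remark does not help: $x_\alpha\partial_{x_\alpha}$ alone does not preserve $(f)$. The sum $\sum_\alpha x_\alpha\partial_{x_\alpha}$, which gives $E(f)=f$ only when $x_0$ is free, is the Euler field of the cone and vanishes on $\mathbb{P}^{N_{b_1}}$. Off $\mathscr{X}$, global generation of $T(1,0,0)$ is irrelevant, since the generating sections must be logarithmic. Finally, deferring to ``exactly as Pacienza--Rousseau do'' does not fill the gap: their device is the cyclic cover, i.e.\ the paper's route, not yours.

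A clean repair within your approach is as follows. Let $E_Z,E_X,E_Y$ be the Euler fields. Let $\mathcal{F}_0$ be the kernel of the surjection $\mathcal{O}(1,0,0)^{3}\oplus\mathcal{O}(0,1,0)^{N_{b_1}+1}\oplus\mathcal{O}(0,0,1)^{N_{b_2}+1}\to\mathcal{O}(b_1,1,0)\oplus\mathcal{O}(b_2,0,1)$, $\tilde V\mapsto(\tilde V F,\tilde V G)$. Since $E_XF=F$, $E_YG=G$ and $E_Z-b_1E_X-b_2E_Y\in\mathcal{F}_0$, the Euler sequence gives $T(-\log\mathscr{X})\cong\mathcal{F}_0/\mathcal{O}\cdot(E_Z-b_1E_X-b_2E_Y)$. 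So it suffices to show that $\mathcal{F}_0(1,0,0)$ is globally generated.

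Fibrewise this uses exactly your ingredients:
\begin{itemize}
\item A quadratic field $P$ on $\mathbb{C}^3$ takes any value at $Z$. It lifts by solving $\sum_\alpha Q_\alpha Z^\alpha=-P(F)$ with $Q_\alpha$ of bidegree $(1,1)$, and similarly for $G$.
\item The sections $\ell(X)\bigl(Z_k\partial_{X_{\gamma+e_j}}-Z_j\partial_{X_{\gamma+e_k}}\bigr)$, with $|\gamma|=b_1-1$, span $\{q:\sum_\alpha q_\alpha Z^\alpha=0\}$. Identifying $q$ with the form $\sum_\alpha q_\alpha W^\alpha$, this space consists of the degree-$b_1$ forms vanishing at $[Z]$, which are spanned by the $W^\gamma(Z_kW_j-Z_jW_k)$.
\end{itemize}

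In this picture the residue direction at $p\in\mathscr{X}_1$ is the class of $(0,X,0)$, which lies in $\mathcal{F}_0(p)$ precisely because $F(p)=0$. In an honest affine chart it appears, for example, as $\partial_{x_{e_j}}+z_j\sum_{\alpha\neq 0}x_\alpha\partial_{x_\alpha}$, which satisfies $V(f)=z_jf$, and $z\neq 0$ on $\mathscr{X}_1$ in that chart.
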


\begin{proof}
To trivialize the twist and maintain a smooth ambient space, we move to $\mathbb{P}^4$ with homogeneous coordinates $[Z_0:Z_1:Z_2:Z_3:Z_4]$. Consider the projective space $\mathbb{P}^{N_{b_1}+1}$ with coordinates $\left[ [X_{\alpha}] \colon X_{new}\right]$ and $\mathbb{P}^{N_{b_2}+1}$ with coordinates $\left[ [Y_{\alpha}] \colon Y_{new}\right]$. Let $(X_i\neq 0)$ be the canonical open affine space in $\mathbb{P}^{N_{b_1}+1}$ and similarly for $(Y_i\neq 0)$. We define an open set $U = U_X \times U_Y \subset \mathbb{P}^{N_{b_1}+1} \times \mathbb{P}^{N_{b_2}+1}$, where 
$$
U_X \coloneqq (X_{new} \neq 0) \cap \left( \bigcup_{\lvert \alpha \rvert = b_1} (X_{\alpha} \neq 0) \right) \subset \mathbb{P}^{N_{b_1}+1}, 
$$
and
$$
U_Y \coloneqq (Y_{new} \neq 0) \cap \left( \bigcup_{\lvert \beta \rvert = b_2} (Y_{\beta} \neq 0) \right) \subset \mathbb{P}^{N_{b_2}+1}. 
$$

Let $\mathscr{Z} \subset \mathbb{P}^4 \times U$ be the complete intersection defined by the following two independent equations
$$F_1 = X_{new}Z_3^{b_1} + \sum_{\lvert \alpha \rvert=b_1} X_\alpha Z^\alpha = 0,
\quad
F_2 = Y_{new}Z_4^{b_2} + \sum_{\lvert \beta \rvert=b_2} Y_\beta Z^\beta = 0.$$
Because $F_1$ and $F_2$ depend on disjoint sets of parameters and independent auxiliary variables ($Z_3$ and $Z_4$), they intersect transversally, hence $\mathscr{Z}$ is smooth.

Consider the natural projection $\pi: \mathscr{Z} \to \mathbb{P}^2 \times \mathbb{P}^{N_{b_1}} \times \mathbb{P}^{N_{b_2}}$ dropping the auxiliary variables $Z_3, Z_4, X_{new}, Y_{new}$. Then $\mathscr{H} \coloneqq \mathscr{Z} \cap \{Z_3 Z_4 = 0\} = \pi^{-1}(\mathscr{X})$.

Therefore, we obtain a dominant log-morphism 
$$
\pi: (\mathscr{Z}, \mathscr{H}) \to (\mathbb{P}^2 \times \mathbb{P}^{N_{b_1}} \times \mathbb{P}^{N_{b_2}}, \mathscr{X}),
$$ 
which induces a surjective map
$$
\pi_* \colon
\overline{T}_{\mathscr{Z}}(1,0,0)\coloneqq T_{\mathscr{Z}}(-\log\mathscr{H})(1,0,0)
\longrightarrow
T_{\mathbb{P}^2 \times \mathbb{P}^{N_{b_1}} \times \mathbb{P}^{N_{b_2}}}(-\log \mathscr{X})(1,0,0).
$$

To prove the statement of the proposition, it suffices to prove that $\overline{T}_{\mathscr{Z}}(1,0,0)$ is globally generated.

Consider the open set $U_0 = \{Z_0 \neq 0\} \times U$ with the induced inhomogeneous coordinates $z_1, z_2, z_3, z_4$. Normalizing $X_{new} = Y_{new} = 1$, the equations defining $\mathscr{Z}$ on $U_0$ become $\mathscr{Z}_0 = \{ f_1 = 0 \} \cap \{ f_2 = 0 \}$, where
$$f_1 = z_3^{b_1} + \sum_{\lvert \alpha \rvert \le b_1} X_\alpha z^\alpha \quad \text{and} \quad f_2 = z_4^{b_2} + \sum_{\lvert \beta \rvert \le b_2} Y_\beta z^\beta .$$
Note that after the restriction, here $\alpha = (\alpha_1,\alpha_2) \in \mathbb{N}^2$ and $\beta = (\beta_1,\beta_2) \in \mathbb{N}^2$.
The divisor becomes $\mathscr{H}_0 = \mathscr{Z}_0 \cap \{z_3 z_4 = 0\}$.

For the parameter space directions, consider the vector fields
$$V_{\alpha,j}^{(X)} = \frac{\partial}{\partial X_\alpha} - z_j \frac{\partial}{\partial X_{\tilde{\alpha}}}$$
where $\alpha = (\alpha_1, \alpha_{2}) \in \mathbb{N}^2$, $j \in \{1, 2\}$ such that $\alpha_j \geq 1$, $\tilde{\alpha}_k = \alpha_k$ if $k \neq j$ and $\tilde{\alpha}_j = \alpha_j - 1$. This trivially satisfies $V_{\alpha,j}^{(X)}(f_1) = 0$ and $V_{\alpha,j}^{(X)}(f_2) = 0$. 

We symmetrically construct fields $V_{\beta,k}^{(Y)}$ for the $Y$-parameters. Notice that these are logarithmic vector fields of $(\mathscr{Z}_0, \mathscr{H}_0)$ which extend to $(\mathscr{Z}, \mathscr{H})$ with a pole order equal to 1. 

Consider a vector field on the base space
$$V_Z = \sum_{j=1}^2 v_j \frac{\partial}{\partial z_j}$$
where $v_j = \sum_{k=1}^2 v_k^{(j)} z_k + v_0^{(j)}$ is linear in the variables $z_k$. We extend this to the auxiliary variables:
$$V_0 = V_Z + v_3 z_3 \frac{\partial}{\partial z_3} + v_4 z_4 \frac{\partial}{\partial z_4}$$
where $v_3, v_4 \in \mathbb{C}$. The presence of $z_3$ and $z_4$ in the coefficients ensures $V_0$ is logarithmic with respect to $\mathscr{H}_0$. Furthermore, because its coefficients are at most linear in the affine coordinates, $V_0$ extends to a global vector field on $\mathbb{P}^4$.

We claim that there exist vertical corrections $V_X = \sum_{\lvert\alpha\rvert \leq b_1} V_\alpha^{(X)} \frac{\partial}{\partial X_\alpha}$ and $V_Y = \sum_{\lvert\beta\rvert \leq b_2} V_\beta^{(Y)} \frac{\partial}{\partial Y_\beta}$ such that the combined vector field $V = V_X + V_Y + V_0$ is tangent to $\mathscr{Z}_0$. 

Indeed, since $f_1$ does not depend on the variables $Y_\beta$ or $z_4$, applying $V$ to $f_1$ yields
$$ V(f_1) = \sum_{\lvert \alpha \rvert \le b_1} V_\alpha^{(X)} z^\alpha + \sum_{\alpha, j} X_\alpha v_j \frac{\partial z^\alpha}{\partial z_j} + b_1 v_3 z_3^{b_1}. $$
To evaluate this on the subvariety $\mathscr{Z}_0$, we substitute $z_3^{b_1} = - \sum X_\alpha z^\alpha$, which gives
$$ V(f_1)\Big|_{\mathscr{Z}_0} = \sum_{\lvert \alpha \rvert \le b_1} V_\alpha^{(X)} z^\alpha + \sum_{\alpha, j} X_\alpha v_j \frac{\partial z^\alpha}{\partial z_j} - b_1 v_3 \sum_{\lvert \alpha \rvert \le b_1} X_\alpha z^\alpha = 0. $$

Because the base coefficients $v_j = \sum_{k=1}^2 v_k^{(j)} z_k + v_0^{(j)}$ are linear in the spatial variables $z_k$, the derivative operation $v_j \frac{\partial}{\partial z_j}$ maps any monomial $z^\alpha$ to a linear combination of other monomials of degree at most $b_1$. Thus, we can expand and group the sum by the resulting monomials $z^\mu$ to obtain
$$ \sum_{\alpha, j} X_\alpha v_j \frac{\partial z^\alpha}{\partial z_j} = \sum_{\lvert \mu \rvert \le b_1} L_\mu(X) z^\mu, $$
where each coefficient $L_\mu(X)$ is a homogeneous linear polynomial in the parameters $X$. Replacing the index $\mu$ with $\alpha$, we obtain
$$ \sum_{\lvert \alpha \rvert \le b_1} \Big[ V_\alpha^{(X)} - b_1 v_3 X_\alpha + L_\alpha(X) \Big] z^\alpha = 0. $$
This uniquely determines our vertical corrections on the affine chart as
$$ V_\alpha^{(X)} = b_1 v_3 X_\alpha - L_\alpha(X). $$
Therefore, the coefficients $V_\alpha^{(X)}$ are homogeneous linear polynomials in $X$. This guarantees that the vertical correction $V_X = \sum_{\lvert\alpha\rvert \le b_1} V_\alpha^{(X)} \frac{\partial}{\partial X_\alpha}$ extends to a globally defined vector field on $\mathbb{P}^{N_{b_1}+1}$. Consequently, it restricts to a valid, smooth vector field on the open subset $U_X$.

Similarly, we can solve the tangency condition $V(f_2) = 0$ on $\mathscr{Z}_0$. By an identical computation substituting $z_4^{b_2} = - \sum Y_\beta z^\beta$, we obtain
$$ \sum_{\lvert \beta \rvert \le b_2} V_\beta^{(Y)} z^\beta + \sum_{\beta, j} Y_\beta v_j \frac{\partial z^\beta}{\partial z_j} - b_2 v_4 \sum_{\lvert \beta \rvert \le b_2} Y_\beta z^\beta = 0, $$
and we uniquely choose $V_\beta^{(Y)}$ as suitable linear combinations of the parameters $Y$ to cancel the coefficients of $z^\beta$, and $V_Y$ gives a smooth vector field on $U_Y$.

Because $f_1$ and $f_2$ depend on disjoint sets of parameters, a single choice of $V_0$ on the base uniquely determines both required vertical corrections without interference. By construction, this logarithmic vector field $V$ of $(\mathscr{Z}_0, \mathscr{H}_0)$ extends to $(\mathscr{Z}, \mathscr{H})$. 

The vector fields $V_{\alpha,j}^{(X)}$, $V_{\beta,k}^{(Y)}$, and $V$ give the global generation of $\overline{T}_{\mathscr{Z}}(1,0,0)$.
\end{proof}

\begin{theorem}\label{thm: Non Zero Section of Twisted Log Canonical}
    Let $B=B_1 \cup B_2$ be a reduced plane curve with two irreducible components such that $B$ has simple normal crossings, $b_1 \coloneqq \deg B_1 \leq b_2 \coloneqq \deg B_2$, and $b\coloneqq b_1 + b_2$. 
    
    Fix $d\in \mathbb{N}$. For a general $B$, any integral curve $D\subset \mathbb{P}^2$ of degree $d$ which is not contained in $B$ satisfies
    $$
    h^0\left( \tilde{D} , (K_{\tilde{D}}+\nu_{D}^{-1}(D\cap B))\otimes \nu_{D}^* \mathcal{O}_{\mathbb{P}^2}(4-b) \right) 
    \neq 0.
    $$
    Here by a slight abuse of notation, we also use $\nu_D$ to denote the composition of the normalization map $\tilde{D} \to D$ with the embedding $D \hookrightarrow \mathbb{P}^2$.
\end{theorem}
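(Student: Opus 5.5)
We follow the Pacienza--Rousseau scheme (Clemens--Voisin type argument), using the family $\mathscr{X}\to \mathbb{P}^{N_{b_1}}\times\mathbb{P}^{N_{b_2}}$ and the global generation of Proposition~\ref{prop: Global Generation of Twisted Tangent Bundle}. Write $P\coloneqq \mathbb{P}^{N_{b_1}}\times\mathbb{P}^{N_{b_2}}$. Degree $d$ integral plane curves are parametrized by a quasi-projective scheme, so there is a finite union of families we can handle one at a time. Suppose the statement fails for general $B$. Then, after a generically finite base change $T\to P$ (dominant, $T$ smooth, $\dim T=\dim P$), we obtain a family of integral curves $\mathscr{D}\subset \mathbb{P}^2\times T$, with $\mathscr{D}_t\not\subset B_t$ for each $t$. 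Taking a simultaneous resolution, we get a smooth variety $\tilde{\mathscr{D}}$ of dimension $\dim P+1$ with fibers $\tilde{D}_t$ and a map $f\colon \tilde{\mathscr{D}}\to \mathbb{P}^2\times P$. Shrinking $T$, the pullback $\mathscr{H}=f^{-1}(\mathscr{X})$ can be taken to be relatively simple normal crossing, i.e.\ finitely many sections $\nu^{-1}(D_t\cap B_t)$ moving étale over $T$.

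\textbf{Step 1: generically surjective logarithmic map.} Because the family dominates $P$, the image $f(\tilde{\mathscr{D}})$ has dimension $\dim P+1$, so $f$ is generically finite onto its image. This gives an injective map of sheaves $T_{\tilde{\mathscr{D}}}(-\log\mathscr{H})\to f^*T_{\mathbb{P}^2\times P}(-\log\mathscr{X})$ whose cokernel is generically the logarithmic normal sheaf $N$, a vector bundle of rank $1$ off a proper closed set. Dualizing and taking top wedge gives the adjunction identity $\det N \cong f^*\det T_{\mathbb{P}^2\times P}(-\log\mathscr{X})\otimes K_{\tilde{\mathscr{D}}}(\log\mathscr{H})$. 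The same identity holds restricted to a fibre, after using that $T_T$ is trivial there, so $K_{\tilde{\mathscr D}}(\log \mathscr{H})|_{\tilde D_t}= K_{\tilde D_t}+\nu^{-1}(D_t\cap B_t)$. The determinant of $T_{\mathbb{P}^2\times P}(-\log\mathscr{X})$ restricted to $\mathbb{P}^2\times\{pt\}$ is $\mathcal{O}(-3+b)$, since the $P$ directions contribute trivially on the fibre and $\mathscr{X}$ has bidegree $b$ in $Z$.

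\textbf{Step 2: sections from global generation.} Proposition~\ref{prop: Global Generation of Twisted Tangent Bundle} says that $T_{\mathbb{P}^2\times P}(-\log\mathscr{X})\otimes\mathcal{O}(1,0,0)$ is globally generated. Hence $f^*(\cdot)\otimes\mathcal{O}(1)$ is globally generated, and so is its rank-one quotient $N\otimes\nu^*\mathcal{O}(1)$ (generically along a general fibre). Therefore $N\otimes \nu^*\mathcal O(1)$ restricted to $\tilde D_t$ has a nonzero section.

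Now take wedge products, following Pacienza--Rousseau. Choose global sections $s_1,\dots,s_{n}$ of the twisted bundle, with $n=\dim P+1$ equal to the rank of $T_{\tilde{\mathscr{D}}}(-\log\mathscr H)$, whose images in $N(1)$ generate at a general point. Together with a local frame of the image of $T_{\tilde{\mathscr D}}(-\log\mathscr{H})$, these produce a nonzero section of
\[
\det f^*T(-\log\mathscr X)(1)\otimes \det T_{\tilde{\mathscr D}}(-\log\mathscr H)^{-1}\otimes\cdots .
\]
Restricted to the fibre, the direct route is more efficient: use a nonzero section $\sigma$ of $N(1)|_{\tilde D_t}$. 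Then use the identification
\[
N|_{\tilde D_t} \cong \bigl(K_{\tilde D_t}+\nu^{-1}(D_t\cap B_t)\bigr)\otimes\nu^*\mathcal{O}(3-b)\otimes(\text{trivial}),
\]
which comes from the Step 1 identity on the fibre, with the $P$-directions split off as trivial. This yields a nonzero section of $(K_{\tilde D}+\nu^{-1}(D\cap B))\otimes\nu^*\mathcal{O}(4-b)$.

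\textbf{Main obstacle.} The hardest step is making the normal-sheaf identification on a single fibre rigorous, because $N$ may have torsion or fail to be locally free along the locus where $f$ ramifies. First we will try to restrict to a general $t$ and a dense open set of $\tilde D_t$, where everything is a bundle. Then we will argue that a nonzero rational section which is regular on that open set extends, because the natural map $N(1)|_{\tilde D_t}\to \det$-formula is injective on the torsion-free part. Any degeneracy locus only adds effective divisors, which only helps nonvanishing: the section of $N(1)$ maps into $(K+\nu^{-1}(D\cap B))(4-b)$, possibly twisted down by an effective divisor, and the twist can be absorbed. A second point to check is that the family of $P$ directions really restricts trivially. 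This holds because $T$ is étale over $P$ generically and the fibre tangent contributions of $P$ come with twist $\mathcal{O}(1)$ only in the $X,Y$ factors, which are constant on a fibre.

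Finally, since the set of $B$ for which the statement fails is constructible, it is not dense, and the statement holds for general $B$.
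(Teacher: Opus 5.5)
Your proof is correct and is essentially the paper's argument. The composite $f^*T(-\log\mathscr{X})(1,0,0)\to N(1)\to\det N(1)$ on $\tilde{D}_t$, with $N$ your log normal sheaf, is exactly the paper's map $\Omega^{\dim P+1}(\log\mathscr{X})|_{\mathbb{P}^2_B}(4-b)\cong T(-\log\mathscr{X})(1,0,0)|_{\mathbb{P}^2_B}\to (K_{\tilde{D}}+\tilde{P})\otimes\nu_D^*\mathcal{O}(4-b)$, given by pulling back $(\dim P+1)$-forms. Because $N\to\det N$ is a genuine sheaf map (wedge with the image of $\det T_{\tilde{\mathscr{D}}}(-\log\mathscr{H})$), your torsion concern disappears just as it does in the paper. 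The one slip is in Step 1: $\det T(-\log\mathscr{X})|_{\mathbb{P}^2\times\{pt\}}$ is $\mathcal{O}(3-b)$, not $\mathcal{O}(b-3)$, and your later formula correctly uses $\mathcal{O}(3-b)$.
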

\begin{proof}
    Let $U\subset \mathbb{P}^{N_{b_1}} \times \mathbb{P}^{N_{b_2}}$ be the Zariski open dense subset parameterizing $(B_1,B_2)$ such that $B_1+B_2$ has simple normal crossings. Now consider
        $
        \mathbb{P}^{N_d} \times U
        $
        which parameterizes the pair $(D,B=B_1+B_2)$. Let $\mathcal{H}_d \subseteq \mathbb{P}^{N_d} \times U$ be the locus where the statement of the theorem fails. To prove the theorem, it suffices to show that the projection map $\pi_U \colon \mathcal{H}_d \to U$ is not dominant.

        Assume the contrary, then by passing to suitable subvarieties, we may assume without loss of generality that $\dim \mathcal{H}_d = \dim U$. By passing to the \'etale locus of $\pi_U \colon \mathcal{H}_d \to U$, we may assume that we are considering an irreducible subvariety
        $
        \mathscr{D} \subset \mathbb{P}^2 \times U
        $
        such that the projection map $\pi_d\colon \mathscr{D} \to U$ is of relative dimension $1$ and its fibers are degree $d$ curves. Let $\mathscr{P}$ be the family of divisors induced by the intersections $D \cap B$ for $(D,B) \in \mathcal{H}_d$, and let $(\tilde{\mathscr{D}},\tilde{\mathscr{P}}) \to (\mathscr{D},\mathscr{P})$ be a log resolution of $(\mathscr{D} , \mathscr{P})$.

        Denote $N\coloneqq N_{b_1}+N_{b_2}$.
        
        Take a general $B \in U$, let $D$ be its corresponding fiber of $\pi_d$ which is a degree $d$ curve, and $(\tilde{D}, \tilde{P})$ be the corresponding fiber of the log resolution. By construction, we have $\tilde{P}=\nu_D^{-1}(D\cap B)$. Denote the fiber of the projection map $\mathbb{P}^2\times U \to U$ over $B$ by $\mathbb{P}_B^2 \coloneqq \mathbb{P}^2\times \{B\}$.
        
        Then
        \begin{equation}\label{equation1}
            K_{\tilde{D}}+\tilde{P} \cong \Omega_{\tilde{\mathscr{D}}}(\log \tilde{\mathscr{P}})^{N+1} \Big\vert_{\tilde{D}}.
        \end{equation}
        Indeed,
        $$
        \Omega_{\tilde{\mathscr{D}}}(\log \tilde{\mathscr{P}})^{N+1} \Big\vert_{\tilde{D}}
        = \Omega_{\tilde{\mathscr{D}}}^{N+1} \otimes \mathcal{O}_{\tilde{\mathscr{D}}}(\tilde{\mathscr{P}}) \Big\vert_{\tilde{D}}
        = \Omega_{\tilde{\mathscr{D}}}^{N+1}\Big\vert_{\tilde{D}} \otimes \mathcal{O}_{\tilde{D}}(\tilde{P}),
        $$
        and by the adjunction formula
        $$
        K_{\tilde{D}} 
        = (K_{\tilde{\mathscr{D}}} \otimes \det \mathcal{N}_{\tilde{D}/\tilde{\mathscr{D}}}) \Big\vert_{\tilde{D}}
        = \Omega_{\tilde{\mathscr{D}}}^{N+1}\Big\vert_{\tilde{D}}.
        $$
        since $K_{\tilde{\mathscr{D}}} = \Omega_{\tilde{\mathscr{D}}}^{N+1}$ and $\mathcal{N}_{\tilde{D}/\tilde{\mathscr{D}}}$ is trivial as the normal bundle of a fiber in a family.
        
        And by construction we have
        \begin{equation}\label{equation2}
            T_{\mathbb{P}^2 \times \mathbb{P}^{N_{b_1}} \times \mathbb{P}^{N_{b_2}}}(- \log \mathscr{X}) \Big\vert_{\mathbb{P}_{B}^2} \otimes (K_{\mathbb{P}_B^2}+B)
            \cong
            \Omega_{\mathbb{P}^2 \times \mathbb{P}^{N_{b_1}} \times \mathbb{P}^{N_{b_2}}}(\log \mathscr{X})^{N+1} \Big\vert_{\mathbb{P}_B^2}.
        \end{equation}

        We have a generically surjective map $\Omega_{\mathbb{P}^2 \times \mathbb{P}^{N_{b_1}} \times \mathbb{P}^{N_{b_2}}}(\log \mathscr{X}) \to \Omega_{\tilde{\mathscr{D}}}(\log \tilde{\mathscr{P}})$, which induces a map
        $$
        \Omega_{\mathbb{P}^2 \times \mathbb{P}^{N_{b_1}} \times \mathbb{P}^{N_{b_2}}}(\log \mathscr{X})^{N+1} \Big\vert_{\mathbb{P}_B^2}(4-b)
        \longrightarrow
        \Omega_{\tilde{\mathscr{D}}}(\log \tilde{\mathscr{P}})^{N+1}\Big\vert_{\tilde{D}}(4-b)
        =
        (K_{\tilde{D}}+\tilde{P})\otimes\nu_D^*\mathcal{O}_{\mathbb{P}^2}(4-b).
        $$
        Since $B$ is chosen to be general in $U$, we may assume that this map is non-zero.

        Recall that $K_{\mathbb{P}^2}+B = \mathcal{O}_{\mathbb{P}^2}(b-3)$. By \eqref{equation2}, we obtain
        \begin{equation*}
            \begin{split}
                \Omega_{\mathbb{P}^2 \times \mathbb{P}^{N_{b_1}} \times \mathbb{P}^{N_{b_2}}}(\log \mathscr{X})^{N+1} \Big\vert_{\mathbb{P}_B^2}(4-b)
                & \cong
                T_{\mathbb{P}^2 \times \mathbb{P}^{N_{b_1}} \times \mathbb{P}^{N_{b_2}}}(- \log \mathscr{X}) \Big\vert_{\mathbb{P}_{B}^2} \otimes \mathcal{O}_{\mathbb{P}^2}(1)
                \\
                & =
                T_{\mathbb{P}^2 \times \mathbb{P}^{N_{b_1}} \times \mathbb{P}^{N_{b_2}}}(-\log \mathscr{X})(1,0,0) \Big\vert_{\mathbb{P}_B^2}.
            \end{split}
        \end{equation*}
        
        By Proposition~\ref{prop: Global Generation of Twisted Tangent Bundle}, we know $\Omega_{\mathbb{P}^2 \times \mathbb{P}^{N_{b_1}} \times \mathbb{P}^{N_{b_2}}}(\log \mathscr{X})^{N+1} \Big\vert_{\mathbb{P}_B^2}(4-b)$ is globally generated. Then the non-zero map
        $$
        \Omega_{\mathbb{P}^2 \times \mathbb{P}^{N_{b_1}} \times \mathbb{P}^{N_{b_2}}}(\log \mathscr{X})^{N+1} \Big\vert_{\mathbb{P}_B^2}(4-b)
        \longrightarrow
        (K_{\tilde{D}}+\tilde{P})\otimes\nu_D^*\mathcal{O}_{\mathbb{P}^2}(4-b)
        $$
        produces a non-zero section of $(K_{\tilde{D}}+\tilde{P})\otimes\nu_D^*\mathcal{O}_{\mathbb{P}^2}(4-b)$, contradiction.
\end{proof}

\begin{theorem}\label{thm:TwoComponentGenericEmpty}
    Let $B=B_1\cup B_2$ be a reduced plane curve with two irreducible components such that $B$ has simple normal crossings, $b_1 \coloneqq \deg B_1 \leq b_2 \coloneqq \deg B_2$, and $b\coloneqq b_1 + b_2$. 
    
    If $b_2\geq b_1 \geq 5$ or $b_1=4$, $b_2\geq 7$, then for a general $B$, the algebraic exceptional set $\mathcal{E}(B)$ is empty.
\end{theorem}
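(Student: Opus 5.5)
The plan is to combine the uniform degree bound of Theorem~\ref{thm:LogCanonicalBound-Sabatino} with the twisted vanishing statement of Theorem~\ref{thm: Non Zero Section of Twisted Log Canonical}. Throughout, $B$ ranges over the dense open subset of $\lvert \mathcal{O}(b_1)\rvert\times\lvert\mathcal{O}(b_2)\rvert$ where $B_1+B_2$ is simple normal crossing. The degree hypotheses give $\bar c_1^2-\bar c_2>0$ by Lemma~\ref{lem:degree of two component curve with c1^2-c2>0}. Also $K_{\mathbb{P}^2}+B=\mathcal{O}(b-3)$ is ample, so it is big and nef.

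\textbf{Step 1: uniform degree bound.} The log Chern numbers $\bar c_1^2=(b-3)^2$ and $\bar c_2$ depend only on $(b_1,b_2)$, since $\bar c_2 = 3-\chi$ of the boundary, computed from $b_1$, $b_2$ and $b_1b_2$. Hence the constants $\mathbf{A},\mathbf{B}$ in Theorem~\ref{thm:LogCanonicalBound-Sabatino} are uniform in $B$. Let $C\in\mathcal{E}(B)$. First, $C$ is not contained in $B$: each $B_i$ has degree $\ge 4$, hence positive genus, so it is not exceptional. Then $2g(C)-2+\#\nu_C^{-1}(C\cap B)\le 0$, and so
\[
(b-3)\deg C\le \mathbf{B}.
\]
This gives $\deg C\le d_0$ for some $d_0$ independent of $B$.

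\textbf{Step 2: apply the vanishing for each degree.} For each $d\le d_0$, Theorem~\ref{thm: Non Zero Section of Twisted Log Canonical} gives a dense open set $U_d$. For $B\in U_d$, every integral curve $D$ of degree $d$ not in $B$ satisfies
\[
h^0\bigl(\tilde D,(K_{\tilde D}+\tilde P)\otimes\nu_D^*\mathcal{O}(4-b)\bigr)\ne0.
\]
Intersecting the finitely many $U_d$ gives a single dense open set, which is the genericity condition we use.

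\textbf{Step 3: contradiction by degree count.} Take $B$ in this open set and suppose $C\in\mathcal{E}(B)$ with $\deg C=d$. By Remark~\ref{rmk: exceptional set for ample boundary}, $\tilde C\cong\mathbb{P}^1$ and $\#\tilde P\le2$. The line bundle in question then has degree
\[
-2+\#\tilde P+(4-b)d\le (4-b)d<0,
\]
because $b\ge 9$ and $d\ge1$. A line bundle of negative degree on $\mathbb{P}^1$ has no nonzero sections, which contradicts Step 2. Hence $\mathcal{E}(B)=\emptyset$.

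\textbf{Main obstacle.} The delicate point is Step 1: one must check that Sabatino's constants depend only on the numerical data $(b_1,b_2)$, so that the bound $d_0$ is uniform as $B$ varies in the family. Only this uniformity lets us intersect finitely many open sets rather than countably many, which is exactly what upgrades "very general" to "general". I would verify it by writing $\bar c_1^2$ and $\bar c_2$ explicitly in terms of $b_1,b_2$. The remaining steps are formal.
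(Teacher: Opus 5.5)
Your proposal is correct and follows the paper's argument. Sabatino's theorem gives a degree bound $\deg C\le N(b_1,b_2)$ that is uniform in $B$; you then take $B$ general so that Theorem~\ref{thm: Non Zero Section of Twisted Log Canonical} holds for the finitely many degrees $d\le N(b_1,b_2)$, and a degree count excludes exceptional curves. The difference is only cosmetic: the paper proves that $K_{\tilde D}+\nu_D^{-1}(D\cap B)$ is ample for every integral $D$ of bounded degree, by writing it as the effective twisted bundle tensored with the ample $\nu_D^*\mathcal{O}_{\mathbb{P}^2}(b-4)$, whereas you compute the degree directly on $\tilde C\cong\mathbb{P}^1$ and also check, correctly, that components of $B$ are excluded.
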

\begin{proof}
    By Lemma~\ref{lem:degree of two component curve with c1^2-c2>0}, we notice that the assumption $b_2\geq b_1 \geq 5$ or $b_1=4$, $b_2\geq 7$ is equivalent to $\bar{c}_1^2 - \bar{c}_2 > 0$. Then $K_{\mathbb{P}^2} + B = \mathcal{O}_{\mathbb{P}^2}(b-3)$ is (very) ample. Therefore, by Theorem~\ref{thm:LogCanonicalBound-Sabatino}, we know there exists a positive integer $N(b_1,b_2) \in \mathbb{N}$ depending only on $b_1$ and $b_2$ such that for any curve $C \in \mathcal{E}(B)$, we have $\deg C \leq N(b_1,b_2)$.

    Now, we choose $B$ to be general such that Theorem~\ref{thm: Non Zero Section of Twisted Log Canonical} holds for all curves of degree $\leq N(b_1,b_2)$. Then, for any integral curve $D$ of degree $d \leq N(b_1,b_2)$, the logarithmic canonical divisor $K_{\tilde{D}}+\nu_D^{-1}(D \cap B)$ is ample. Indeed, we have
    $$
    K_{\tilde{D}}+\nu_D^{-1}(D \cap B) = \left((K_{\tilde{D}}+\nu_D^{-1}(D \cap B)) \otimes \nu_{D}^*\mathcal{O}_{\mathbb{P}^2}(4-b)  \right) \otimes \nu_D^* \mathcal{O}_{\mathbb{P}^2}(b-4).
    $$
    Since $b>4$, we know $\nu_D^* \mathcal{O}_{\mathbb{P}^2}(b-4)$ is ample. By Theorem~\ref{thm: Non Zero Section of Twisted Log Canonical}, we know $(K_{\tilde{D}}+\nu_D^{-1}(D \cap B)) \otimes \nu_{D}^*\mathcal{O}_{\mathbb{P}^2}(4-b)$ has a non-zero global section, hence it has non-negative degree. Therefore, $K_{\tilde{D}}+\nu_D^{-1}(D \cap B)$ is ample. By construction, curves in $\mathcal{E}(B)$ are not of log-general type, hence this implies that $\mathcal{E}(B) = \emptyset$.
\end{proof}

In \cite{caporaso2024hypertangencyplanecurvesalgebraic}, Caporaso--Turchet proved that for a plane curve $B$ of $\deg B\geq 5$ with at least three irreducible components, if $B$ is general, then $\mathcal{E}(B)$ is empty. By Lemma~\ref{lem:degree of two component curve with c1^2-c2>0} and El Goul's result Proposition~\ref{prop:AlgebraicDegeneracyOfEntireCurve-ElGoul}, from \cite{caporaso2024hypertangencyplanecurvesalgebraic} and Theorem~\ref{thm:TwoComponentGenericEmpty}, we obtain the following:
\begin{theorem}\label{thm: Brody hyperbolicity of complement of plane curve with c1^2-c2>0}
    Let $(\mathbb{P}^2,B)$ be a log-smooth surface with $\bar{c}_1^2 - \bar{c}_2>0$. Then for a general $B$, the surface $\mathbb{P}^2\setminus B$ is Brody hyperbolic.
\end{theorem}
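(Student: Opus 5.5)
The plan is to argue by contradiction, combining El Goul's algebraic degeneracy with the emptiness of the algebraic exceptional set, and to split into cases according to the number $r$ of irreducible components of $B$. Suppose $f\colon \mathbb{C}\to \mathbb{P}^2\setminus B$ is a non-constant holomorphic map.

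First we check the hypotheses of Proposition~\ref{prop:AlgebraicDegeneracyOfEntireCurve-ElGoul}. By Lemma~\ref{lem:degree of two component curve with c1^2-c2>0}, the condition $\bar{c}_1^2-\bar{c}_2>0$ forces $r\geq 2$. It also forces $b=\deg B\geq 5$ in every case: for $r=2$ we have $b\geq 9$, for $r=3$ we have $b\geq 7$ or $b\geq 8$, and for $r\geq 4$ we have $b\geq 5$. Hence $K_{\mathbb{P}^2}+B=\mathcal{O}_{\mathbb{P}^2}(b-3)$ is ample, and $(\mathbb{P}^2,B)$ is of log-general type. Proposition~\ref{prop:AlgebraicDegeneracyOfEntireCurve-ElGoul} then gives an algebraic curve containing $f(\mathbb{C})$. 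Taking the Zariski closure of $f(\mathbb{C})$, we obtain an integral curve $C\not\subset B$.

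Next we show that $C$ lies in $\mathcal{E}(B)$. Composing $f$ with the normalization gives a lift $\tilde f\colon \mathbb{C}\to \tilde{C}\setminus \nu_C^{-1}(C\cap B)$. The lift exists because $\mathbb{C}$ is simply connected and the normalization is an isomorphism away from finitely many points. Strictly, one first lifts on the complement of the discrete preimage of the singular points and then extends by Riemann extension, since $\tilde C$ is compact. The lift $\tilde f$ is non-constant. By the classical uniformization/Picard argument, a Riemann surface of the form (compact curve minus finitely many points) receives a non-constant entire map only if it is not hyperbolic, that is, only if $2g(\tilde C)-2+\#\nu_C^{-1}(C\cap B)\leq 0$. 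Therefore $C\in\mathcal{E}(B)$.

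Finally we show that $\mathcal{E}(B)=\emptyset$ for general $B$, which gives the contradiction. If $r=2$, Lemma~\ref{lem:degree of two component curve with c1^2-c2>0} shows that the degrees satisfy exactly the hypotheses of Theorem~\ref{thm:TwoComponentGenericEmpty}, so $\mathcal{E}(B)=\emptyset$ for general $B$. If $r\geq 3$, we have $\deg B\geq 5$ as noted above, and the Caporaso--Turchet result quoted just before the statement gives $\mathcal{E}(B)=\emptyset$ for general $B$.

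The only subtle point is bookkeeping of ``general''. For fixed component degrees $(b_1,\dots,b_r)$, each of the two results provides a nonempty Zariski open subset of $\lvert B_1\rvert\times\cdots\times\lvert B_r\rvert$. We intersect it with the snc locus and take $B$ in that set, which matches the paper's convention. I expect no real obstacle. The main care needed is the verification that $\bar c_1^2-\bar c_2>0$ implies $\deg B\geq 5$ when $r\geq 3$, so that the Caporaso--Turchet hypothesis applies, together with the lifting to the normalization.
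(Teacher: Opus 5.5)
Your proposal is correct and follows the paper's argument: El Goul's degeneracy result (Proposition~\ref{prop:AlgebraicDegeneracyOfEntireCurve-ElGoul}) puts the image of an entire curve inside an integral curve $C\not\subset B$ with $C\in\mathcal{E}(B)$, and $\mathcal{E}(B)=\emptyset$ for general $B$ by Theorem~\ref{thm:TwoComponentGenericEmpty} when $r=2$ and by Caporaso--Turchet when $r\geq 3$, with the degree conditions read off from Rousseau's lemma. You also spell out steps the paper leaves implicit, namely the lift to the normalization and the uniformization/Liouville step; the one small imprecision is that the lift extends across $f^{-1}(\operatorname{Sing} C)$ because $\nu_C$ is finite, so near each puncture the lift stays on one branch and is locally bounded, and neither compactness of $\tilde C$ nor simple connectivity of $\mathbb{C}$ is the reason (simple connectivity is what you use later, to lift to the universal cover).
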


To prove the Kobayashi hyperbolicity of $\mathbb{P}^2\setminus B$ in Theorem~\ref{thm: Brody hyperbolicity of complement of plane curve with c1^2-c2>0}, we need the following lemma, which is inspired by the proof of \cite[Theorem~F]{XieZhao2026kobayashihyperbolicitygeneralsurfaces}:

\begin{lemma}\label{lem : hyperbolically embedding criterion}
    Let $(\mathbb{P}^2,B)$ be a log-smooth surface of log-general type. If $\mathbb{P}^2 \setminus B$ is Brody hyperbolic, then it is hyperbolically embedded in $\mathbb{P}^2$. In particular, it is Kobayashi hyperbolic.
\end{lemma}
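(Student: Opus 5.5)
We will use Green's criterion from \cite{Green77hyperbolicity}: if $X$ is a compact complex manifold and $D\subset X$ a closed complex hypersurface with components $D_1,\dots,D_r$, then $X\setminus D$ is hyperbolically embedded in $X$ provided there is no non-constant holomorphic map $\mathbb{C}\to X\setminus D$, and, for every index subset $I\subsetneq\{1,\dots,r\}$ (including $I=\emptyset$ giving $X\setminus D$ itself), there is no non-constant holomorphic map $\mathbb{C}\to \bigcap_{i\in I}D_i \setminus \bigcup_{j\notin I} D_j$. Equivalently, each stratum $D_I^\circ$ of the stratification induced by $B$ must be Brody hyperbolic. Hyperbolic embeddedness implies Kobayashi hyperbolicity, which gives the last assertion.

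Write $B=\sum_{i=1}^r B_i$ with $B_i$ smooth of degree $b_i$. The open stratum $\mathbb{P}^2\setminus B$ is Brody hyperbolic by hypothesis. Since $B$ has simple normal crossings, the strata with $|I|\ge 2$ are finite sets of points, and the strata with $|I|\geq 3$ are empty. For such zero-dimensional strata every holomorphic map from $\mathbb{C}$ is constant.

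It remains to treat the one-dimensional strata $B_i^\circ\coloneqq B_i\setminus\bigcup_{j\ne i}B_j$. This is a smooth curve of genus $g_i=(b_i-1)(b_i-2)/2$ with $n_i\coloneqq\#(B_i\cap\bigcup_{j\neq i}B_j)=b_i(b-b_i)$ punctures, the count holding by transversality. By the uniformization theorem, such a curve is Brody hyperbolic if and only if $2g_i-2+n_i>0$, i.e. if it is not $\mathbb{P}^1$, $\mathbb{C}$, $\mathbb{C}^*$ or a compact elliptic curve. We compute
$$2g_i-2+n_i=(b_i-1)(b_i-2)-2+b_i(b-b_i)=b_i(b-3).$$
This is positive exactly when $b\ge 4$. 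Log-general type of $(\mathbb{P}^2,B)$ means $K_{\mathbb{P}^2}+B=\mathcal{O}(b-3)$ is big, i.e. $b\geq 4$, so every one-dimensional stratum is a hyperbolic Riemann surface. Green's criterion then applies.

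The only step needing care is matching the exact form of Green's theorem: whether it requires Brody hyperbolicity of all strata $D_I^\circ$ or only of $X\setminus D$ and of each $D_i\setminus\bigcup_{j\ne i}D_j$. In either formulation the verification above covers everything, since the higher strata are finite. No hypothesis on $\bar c_1^2-\bar c_2$ is needed here; that condition enters only through Theorem~\ref{thm: Brody hyperbolicity of complement of plane curve with c1^2-c2>0}.
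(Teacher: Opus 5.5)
Your proposal is correct and follows the paper's proof: the paper also invokes Green's theorem to reduce to showing each $B_i^{\circ}$ is hyperbolic (of log-general type), and it performs the same computation $2g(B_i)-2+\#(B_i\setminus B_i^{\circ})=b_i(b-3)>0$, using $b\geq 4$. One small slip: Green's criterion should range over all index sets $I\subseteq\{1,\dots,r\}$, not only proper ones (for $r=1$ the stratum $B_1$ itself must be checked), but your verification already handles every stratum, including the zero-dimensional ones the paper leaves implicit, so nothing is lost.
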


\begin{proof}
    The log-general type assumption is equivalent to $\deg B \geq 4$. Denote $B = \cup_{i=1}^r B_i$, where the $B_i$'s are the irreducible (smooth) components of $B$; denote $B_i^{\circ} \coloneqq B_i \setminus \left( \cup_{j\neq i} B_j \right)$; denote $b\coloneqq \deg B$ and $b_i \coloneqq \deg B_i$.

    By Green's theorem \cite[Theorem~2]{Green77hyperbolicity}, it suffices to show that $\forall i \in \{1,\cdots,n\}$, $B_i^{\circ}$ is of log-general type. Indeed, we have
    \begin{equation}
        \begin{split}
            \deg K_{B_i} + \# B_i\setminus B_i^{\circ}
            & = \deg K_{B_i} + \sum_{j\neq i} (B_i\cdot B_j)
            \\
            & = 2 p_g(B_i) - 2 + (B_i \cdot B) - B_i^2
            \\
            & = b_i(b_i - 3) + b_ib - b_i^2
            \\
            & = b_i(b-3) >  0
        \end{split}
    \end{equation}
    as desired.
\end{proof}

As an immediate consequence of Theorem~\ref{thm: Brody hyperbolicity of complement of plane curve with c1^2-c2>0} and Lemma~\ref{lem : hyperbolically embedding criterion}, we obtain:

\begin{theorem}\label{thm: Kobayashi hyperbolicity of complement of plane curve with c1^2-c2>0}
    Let $(\mathbb{P}^2,B)$ be a log-smooth surface with $\bar{c}_1^2 - \bar{c}_2>0$. Then for a general $B$, the surface $\mathbb{P}^2\setminus B$ is hyperbolically embedded in $\mathbb{P}^2$. In particular, it is Kobayashi hyperbolic.
\end{theorem}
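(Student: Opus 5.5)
The statement is presented as an immediate consequence, so the plan is simply to chain Theorem~\ref{thm: Brody hyperbolicity of complement of plane curve with c1^2-c2>0} with Lemma~\ref{lem : hyperbolically embedding criterion}.

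First, fix a log-smooth pair $(\mathbb{P}^2,B)$ with $\bar{c}_1^2-\bar{c}_2>0$. I will check that such a pair is of log-general type. By Lemma~\ref{lem:degree of two component curve with c1^2-c2>0}, the numerical condition forces $\deg B\geq 5$ in every case. There are three situations: at least five components; four components with $b_4\geq 2$; or the three- and two-component degree conditions listed there. So $K_{\mathbb{P}^2}+B=\mathcal{O}_{\mathbb{P}^2}(\deg B-3)$ is ample. In particular the hypothesis $\deg B\geq 4$ used in Lemma~\ref{lem : hyperbolically embedding criterion} holds.

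Second, Theorem~\ref{thm: Brody hyperbolicity of complement of plane curve with c1^2-c2>0} gives a nonempty Zariski open subset of the product of linear systems $\lvert B_1\rvert\times\cdots\times\lvert B_r\rvert$. For every $B$ in this subset, $\mathbb{P}^2\setminus B$ is Brody hyperbolic. We intersect it with the open locus where $B$ has simple normal crossings, which is still nonempty and open, so this is harmless for the notion of "general". For every $B$ in this intersection, Lemma~\ref{lem : hyperbolically embedding criterion} applies verbatim. It shows that $\mathbb{P}^2\setminus B$ is hyperbolically embedded in $\mathbb{P}^2$. Hyperbolic embeddedness implies Kobayashi hyperbolicity, because the Kobayashi pseudo-distance of $\mathbb{P}^2\setminus B$ then separates points. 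This gives the final assertion.

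The only point that needs care is the first step, namely that $\bar{c}_1^2-\bar{c}_2>0$ yields log-general type in every case of Lemma~\ref{lem:degree of two component curve with c1^2-c2>0}. I expect this to be a direct degree count rather than a real obstacle. All the genuine content sits in the earlier results: Theorem~\ref{thm:TwoComponentGenericEmpty}, Caporaso--Turchet for three or more components, El Goul's degeneracy, and Green's criterion.
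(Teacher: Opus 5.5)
Your proposal is correct and is exactly the paper's argument: the paper obtains the theorem by applying the hyperbolic-embedding criterion (Green's theorem, via the log-general-type check on each $B_i^{\circ}$) to the general $B$ supplied by the Brody hyperbolicity theorem. Your extra check that $\bar{c}_1^2-\bar{c}_2>0$ forces $\deg B\geq 5$, and hence log-general type, is the hypothesis the paper leaves implicit; it holds in every case of the degree classification.
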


\begin{remark}
    When $B\subset \mathbb{P}^2$ has at least three irreducible components and all the components have small degrees, the log-Chern class inequality $\bar{c}_1^2-\bar{c}_2>0$ is not satisfied. Nevertheless, in these cases, for a general $B$ with $\deg B \geq 5$, the surface $\mathbb{P}^2\setminus B$ is still Kobayashi hyperbolic. Indeed, by \cite{caporaso2024hypertangencyplanecurvesalgebraic} and \cite[Theorem~1]{ru2025campanasorbifoldconjecturenumerically} we know $\mathbb{P}^2\setminus B$ is Brody hyperbolic, then by Lemma~\ref{lem : hyperbolically embedding criterion} we know it is Kobayashi hyperbolic. See \cite{caporaso2026exceptionallocialgebraicsurfaces} for further discussions.
\end{remark}

At last, we provide an alternative proof of the finiteness of $\mathcal{E}(B)$ for any simple normal crossing plane curve $B=B_1 \cup B_2$ with $b_2\coloneqq \deg B_2\geq b_1\coloneqq \deg B_1 \geq 5$ or $b_1=4$, $b_2\geq 7$:

\begin{theorem}
    Let $(\mathbb{P}^2,B)$ be log-smooth, and $B=B_1 \cup B_2$ with $b_2\coloneqq \deg B_2\geq b_1\coloneqq \deg B_1 \geq 5$ or $b_1=4$, $b_2\geq 7$. Then $\mathcal{E}(B)$ is a finite set.
\end{theorem}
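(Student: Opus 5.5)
The idea is to combine the uniform degree bound of Theorem~\ref{thm:LogCanonicalBound-Sabatino} with Xu's finiteness result, Theorem~\ref{thm : xu98-finiteness-of-rational-curves-in-a-linear-system}. This avoids using the boundedness statement of Corollary~\ref{cor: UniformFinitenessExceptionalSet-Sabatino} directly. By Lemma~\ref{lem:degree of two component curve with c1^2-c2>0} the degree assumptions give $\bar{c}_1^2-\bar{c}_2>0$, and $K_{\mathbb{P}^2}+B=\mathcal{O}_{\mathbb{P}^2}(b-3)$ is ample. So, exactly as in the proof of Theorem~\ref{thm:TwoComponentGenericEmpty}, every $C\in\mathcal{E}(B)$ satisfies $\deg C\le N(b_1,b_2)$. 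It therefore suffices to show that, for each fixed $d\le N(b_1,b_2)$, only finitely many $C\in\mathcal{E}(B)$ have degree $d$. By Remark~\ref{rmk: exceptional set for ample boundary}, such $C$ is rational with $\#\nu_C^{-1}(C\cap B)\le 2$. Moreover, $C$ meets both $B_1$ and $B_2$, since $C\cdot B_i>0$.

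We split into cases according to the set-theoretic intersections. \emph{Case 1:} $\#(C\cap B_i)=1$ for some $i$. We apply Theorem~\ref{thm : xu98-finiteness-of-rational-curves-in-a-linear-system} with $S=\mathbb{P}^2$, the curve $B_i$ (smooth of degree $b_i\ge4$, hence $g(B_i)>0$) and $\mathcal{L}=\mathcal{O}(d)$. Here $\mathcal{L}\cdot B_i=db_i>0$. The genus condition reads $0<\tfrac12(b_i-3)d+1$, which holds. Hence only finitely many such $C$ occur.

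\emph{Case 2:} $\#(C\cap B_1)=\#(C\cap B_2)=2$. Since $C$ has at most two branches over $B$, both intersections equal the same set $\{p,q\}$. Thus $p,q\in B_1\cap B_2$, which is a finite set, and $C$ has exactly one branch at each of $p$ and $q$. Since $B_1$ and $B_2$ meet transversally at $p$, the single branch of $C$ at $p$ cannot be tangent to both. Choose $i$ such that this branch is not tangent to $B_i$ at $p$.

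Now blow up $\sigma\colon S'=\mathrm{Bl}_p\mathbb{P}^2\to\mathbb{P}^2$, with exceptional curve $E$ and hyperplane class $H$. The strict transform $B_i'=b_iH-E$ is a smooth curve of positive genus. The strict transform $C'$ meets $B_i'$ only over $q$, because its unique point over $p$ lies in a different tangent direction from $B_i'$. Hence $C'\cap B_i'$ is a single point. We have $C'\in|dH-mE|$, where $m=\operatorname{mult}_pC\le d$ takes finitely many values. With $\mathcal{L}=dH-mE$ we get $\mathcal{L}\cdot B_i'=db_i-m>0$. Also $K_{S'}+B_i'=(b_i-3)H$, so $(K_{S'}+B_i')\cdot\mathcal{L}=(b_i-3)d>0$, and the genus inequality in Theorem~\ref{thm : xu98-finiteness-of-rational-curves-in-a-linear-system} holds for the rational curve $C'$. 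Xu's theorem then gives finitely many $C'$ for each choice of $(p,i,m)$, and there are finitely many such choices.

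The main obstacle is the case analysis: one must check that every configuration of at most two branches falls into Case~1 or Case~2. If some $C\cap B_i$ is a single point we are in Case~1. Otherwise both intersections have two points, and at most two branches force them to coincide, giving Case~2. The other delicate point is the tangency argument after the blow-up, namely ensuring that $C'$ meets the chosen $B_i'$ in exactly one point; this is where transversality of $B_1$ and $B_2$ at the node $p$ is used. Combining the finitely many degrees $d$, nodes $p$, indices $i$ and multiplicities $m$, we conclude that $\mathcal{E}(B)$ is finite.
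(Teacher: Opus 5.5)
Your proposal is correct and follows essentially the same route as the paper. Sabatino's bound reduces to fixed degree, and Xu's theorem is then applied either on $\mathbb{P}^2$ (when some $C\cap B_i$ is a single point) or on the blow-up of $\mathbb{P}^2$ at a point of $B_1\cap B_2$ (when $C\cap B_1=C\cap B_2=\{p,q\}$). Your two-case split mildly streamlines the paper's argument: the paper's separate proof that $\mathcal{E}(B,1)=\emptyset$ and its types (i)--(iii) all fall under your Case~1, and in type (iv) you skip the paper's preliminary check that $C$ is tangent to $B_1$ at one point and to $B_2$ at the other, by blowing up any point where the branch is not tangent to some $B_i$.
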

\begin{proof}
    Let 
    $$
    \mathcal{E}(B,1) \coloneqq 
        \left\{
            C \subset \mathbb{P}^2 
            \ \vert \
            C \text{ is a rational curve such that }
            \#\nu_{C}^{-1}(C \cap B) = 1
        \right\}
    $$
    and
    $$
    \mathcal{E}(B,2) \coloneqq 
        \left\{
            C \subset \mathbb{P}^2 
            \ \vert \
            C \text{ is a rational curve such that }
            \#\nu_{C}^{-1}(C \cap B) = 2
        \right\},
    $$
    then we clearly have $\mathcal{E}(B) = \mathcal{E}(B,1) \cup \mathcal{E}(B,2)$.

    Firstly, we note that $\mathcal{E}(B,1) = \emptyset$. Indeed, suppose $D$ is a curve in $\mathcal{E}(B,1)$ of degree $d$, then $D \cap B_i \neq \emptyset$ for any $i=1,2$. Hence $D\cap B = \{p\}$ for some point $p\in B_1\cap B_2$, and $p$ is a unibranch point of $D$. Then, $D$ must be transverse to one of $B_1$ and $B_2$ at $p$. If $D$ intersects $B_1$ transversally at $p$, then we have 
    $$
    db_1 = (D\cdot B_1)
    = (D \cdot B_1)_{p}
    = \operatorname{mult}_{p}(D)
    \leq d,
    $$
    which implies that $b_1 \leq 1$, contradiction. By symmetry, the same argument applies to the case if $D$ intersects $B_2$ transversally at $p$. Hence $\mathcal{E}(B,1)=\emptyset$.

    Now let's consider $\mathcal{E}(B,2)$. Let $D$ be a (rational) curve in $\mathcal{E}(B,2)$, there are four possible types of $D$:
    \begin{enumerate}[label=(\roman*)]
        \item\label{curve : type1} $\# (D \cap B) = 1$;
        \item\label{curve : type2} $\# (D \cap B) = 2$, and $D\cap (B_1\cap B_2) = \emptyset$;
        \item\label{curve : type3} $\# (D \cap B) = 2$, and $\# (D\cap (B_1\cap B_2)) = 1$;
        \item\label{curve : type4} $\# (D \cap B) = 2$, and $D\cap B \subset B_1\cap B_2$.
    \end{enumerate}

    \begin{claim}
        For any $d\in \mathbb{N}$, there exist only finitely many curves of each type in $\mathcal{E}(B,2) \cap \lvert \mathcal{O}(d) \rvert$.
    \end{claim}

    By Theorem~\ref{thm:LogCanonicalBound-Sabatino}, we know there exists a positive integer $N(b_1,b_2) \in \mathbb{N}$ depending only on $b_1$ and $b_2$ such that for any curve $D \in \mathcal{E}(B)$, we have $\deg D \leq N(b_1,b_2)$. Then the theorem follows from the claim.

    It remains to prove the claim.

    For curves of type~\ref{curve : type1}:
    A curve $D$ of type~\ref{curve : type1} intersects $B_1$ at only one point set-theoretically. Since $b_1 \geq 4$, for the right hand side of the inequality in Theorem~\ref{thm : xu98-finiteness-of-rational-curves-in-a-linear-system}, we have 
    $$
    \frac{1}{2} \left( (K_{\mathbb{P}^2} + B_1) \cdot \mathcal{O}(d) \right) + 1 = \frac{1}{2}d(b_1-3)+1 > 0.
    $$ 
    Now the statement immediately follows from Theorem~\ref{thm : xu98-finiteness-of-rational-curves-in-a-linear-system}.

    For curves of type~\ref{curve : type2}:
    By assumption, we see that for a type~\ref{curve : type2} curve $D$, we must have $\#(D\cap B_1) = \#(D\cap B_2) = 1$. Now the same argument as in the type~\ref{curve : type1} case applies.
     
    For curves of type~\ref{curve : type3}:
    Assume that $D \cap B = \{p,q\}$, $p \in B_1\cap B_2$ and $q\not\in B_1\cap B_2$. If $q \in B_2\setminus B_1$, then $D\cap B_1 = \{p\}$, and the same argument as in the proof of type~\ref{curve : type1} curves case applies. Similarly, if $q \in B_1 \setminus B_2$, then $D\cap B_2 = \{p\}$, and again the same argument as in the proof of type~\ref{curve : type1} curves case applies since $b_2 \geq b_1 \geq 4$.

    For curves of type~\ref{curve : type4}:
    Let $D$ be a curve of type~\ref{curve : type4} of degree $d$. Assume that $D\cap B = \{p,q\}\subset B_1\cap B_2$. Then both $p$ and $q$ are unibranch points of $D$. Let $m_p \coloneqq \operatorname{mult}_{p}(D)$ and $m_q \coloneqq \operatorname{mult}_{q}(D)$.

    Then, $D$ cannot be transverse to $B_i$ at both $p$ and $q$, for any $i=1,2$. Indeed, assume $D$ meets $B_i$ at $p$ and $q$ transversally for some $i\in\{1,2\}$, then we have
    $$
    db_i = (D \cdot B_1) 
    = (D\cdot B_1)_p + (D\cdot B_1)_q
    = m_p + m_q
    \leq 2d,
    $$
    which implies that $b_i \leq 2$, contradiction.

    Without loss of generality, we may assume that $D$ is tangent to $B_1$ at $p$ and is tangent to $B_2$ at $q$. Then, $D$ is transverse to $B_1$ at $q$. Now, consider the blow-up surface $S = \operatorname{Bl}_{q}(\mathbb{P}^2)$. We denote the blow-up morphism as $\pi \colon S \to \mathbb{P}^2$, the exceptional divisor as $E$, and the divisor $\pi^*\mathcal{O}(1)$ as $H$. In $S$, let $\Tilde{D} \in \lvert dH - m_q E\rvert$ be the strict transform of $D$ and $\Tilde{B}_1 \in \lvert b_1 H - E \rvert$ be the strict transform of $B_1$, then $\Tilde{D}$ intersects $\Tilde{B}_1$ at the point $p$ only. For the right hand side of the inequality in Theorem~\ref{thm : xu98-finiteness-of-rational-curves-in-a-linear-system}, we have
    \begin{multline*}
        \frac{1}{2}\left( (K_{S} + \Tilde{B}_1) \cdot \Tilde{D} \right) + 1
        = \frac{1}{2}\left[ \left(-3H + E + b_1 H - E\right) \cdot (dH - m_q E)\right] + 1 \\
        = \frac{1}{2} \left[ (b_1 - 3)H \cdot (dH - m_q E) \right] + 1
        = \frac{1}{2}d(b_1 - 3) + 1 > 0.
    \end{multline*}
    By Theorem~\ref{thm : xu98-finiteness-of-rational-curves-in-a-linear-system}, we see there are only finitely many rational curves in $\lvert d H - m_q E \rvert$ that intersects $\Tilde{B}_1$ at one point set-theoretically.

    Since there are only finitely many possible values of $m_q$---more precisely, $1 \leq m_q \leq d$---it follows that there are only finitely many curves of type~\ref{curve : type4} of degree $d$ that are tangent to $B_1$ at $p$ and to $B_2$ at $q$. Since there are only finitely many choices for $p$ and $q$, we conclude that there are only finitely many curves of type~\ref{curve : type4} of degree $d$.

    Hence the claim is established and the theorem is proved.
\end{proof}

\bibliographystyle{alpha} 
\bibliography{ref}

\end{document}